\newcounter{sideremark}
\newtheorem{Thm}{Theorem}[section]
\newtheorem{Lem}[Thm]{Lemma}
\newtheorem{Cor}[Thm]{Corollary}   
\newtheorem{Def}[Thm]{Definition} 
\newtheorem{remark}[Thm]{Remark}
\newtheorem*{org}{Organization}
\renewenvironment{cases}{\left\{\begin{array}{ll}}{\end{array}\right.}
\newcommand{\vol}{\operatorname{vol}}
\newcommand{\diam}{\operatorname{diam}}
\newcommand{\dist}{\operatorname{dist}}
\renewcommand{\epsilon}{\varepsilon}
\renewcommand{\rho}{\varrho}
\begin{document}
\title[Linear bounds for the lengths of geodesics]{Linear bounds for the lengths of geodesics on manifolds with curvature bounded below}

\author[Beach]{Isabel Beach}
\address[Beach]{Department of Mathematics,  University of Toronto, Toronto, Canada}
\email{isabel.beach@mail.utoronto.ca}

\author[Contreras Peruyero]{Hayde\'e Contreras Peruyero}
\address[Contreras Peruyero]{Centro de Ciencias Matemáticas, UNAM, Morelia, Mexico }
\email{haydeeperuyero@matmor.unam.mx}


\author[Rotman]{Regina Rotman}
\address[Rotman]{Department of Mathematics,  University of Toronto, Toronto, Canada}
\email{rina@math.utoronto.edu}

\author[Searle]{Catherine Searle}
\address[Searle]{Department of Mathematics, Statistics, and Physics, Wichita State University, Wichita, Kansas}
\email{searle@math.wichita.edu}

\date{\today}

\begin{abstract} 
Let $M$ be a simply connected Riemannian manifold in $\mathscr{M}_{k,v}^D(n)$, the  space of closed Riemannian manifolds of dimension $n$ with sectional curvature bounded below by $k$, volume bounded below by $v$, and diameter bounded above by $D$. Let $c$ be the smallest positive real number such that  any closed curve of length at most $2d$ can be contracted to a point over curves of length at most $cd$, where $d$ is the diameter of $M$. In this paper, we show that under these hypotheses there exists a computable rational function, $G(n,k,v,D)$, such that any continuous map of $S^l$ to $\Omega_{p,q}M$, the space of piecewise differentiable curves on $M$ connecting $p$ and $q$, is homotopic to a map 
whose image consists of curves of length at most $\exp(c\exp(G(n,k,v,D))$. In particular, for any points $p,q \in M$ and any integer $m>0$ there exist at least $m$ geodesics connecting $p$ and $q$  of length at most $m\exp(c\exp(G(n,k,v,D))$.
\end{abstract}

\maketitle


\section{Introduction}
Let $M$ be an $n$-dimensional closed Riemannian manifold. In 1951, it was proven by J. P. Serre in \cite{serre} that there exist infinitely many geodesics connecting any pair of points $p, q \in M$. In 1958, A. Schwarz demonstrated that there always exist at least $m$ geodesics between any pair of points on $M$ of length at most $C(M)m$ \cite{schwarz}. It is natural to try to estimate $C(M)$ in terms of various geometric parameters of $M$. Note that when $p=q$, geodesic segments become geodesic loops based at $p$. The first curvature-free estimates for the length of a shortest geodesic loop were obtained by S. Sabourau in \cite{sabourau_2004_loops}. 
\par
Recall that $\mathscr{M}_{k,v}^D(n)$ denotes the  space of closed Riemannian manifolds of dimension $n$ with $\sec(M)\geq k$, $\vol(M)\geq v$, and $d=\diam(M)\leq D$.
In this paper we will estimate $C(M)$ in terms of $k$, $v$, $D$, $n$, 
and a constant $c$, defined below. It is not known whether we can remove any of these constraints in dimensions $n>2$ and still obtain a linear bound. 
The 
conjecture
that one can always bound the $m$th shortest geodesic by $md$ was shown to be false by F.  Balacheff, C.  Croke, and M. Katz in \cite{bck}. 
On the other hand, in dimension two it has been established that there always exist at least $m$ geodesics of length at most $22md$ between any pair of points. The most difficult case is that of a Riemannian $2$-sphere, for which this bound was established by A. Nabutovsky and the fourth author in \cite{rotman_linear_bounds}. It was significantly improved to $8md$ by H. Y. Cheng in \cite{cheng}. In \cite{nr7}, Nabutovsky and the fourth author demonstrated that on any closed, $n$-dimensional Riemannian manifold $M^n$ there exist at least $m$ geodesics of length at most $4nm^2d$ connecting an arbitrary pair of points. While this bound is curvature-free, it is quadratic in $m$. 
\par
When using various curvature bounds, bounds that are linear in $m$ can be established in some specific situations. For example, for a Riemannian manifold $M^n$ of dimension $n$ with Ricci curvature $\mathrm{Ric}(M)\geq (n-1)$,  H.-B. Rademacher demonstrated in \cite{Rad} that for any pair of points $p, q$ there exist at least $m$ geodesics of length at most $((2n-1)m+1)\pi$, improving the prior bound established in \cite{rotman2023}. 
A similar estimate can be established when $M$ is a Riemannian $3$-sphere and $\mathscr{M}_{-k,v, \cdot}^{k, \cdot, D}(n)$-- that is, with $ |\mathrm{sec}(M)| \leq k$,  $\vol(M) \geq v$ and $d=\diam(M) \leq D$. In that case, one can show that there exists a computable function $f_1(k,v,D)$ such that the lengths of the first $m$ shortest geodesics are at most $f_1(k,v,D) m$ (see \cite{nr8}).
In dimensions $n > 3$, the Cheeger--Gromov compactness theorem for $M\in \mathscr{M}_{-k,v, \cdot}^{k, \cdot, D}(n)$ (see ~\cite{stefan_peters}, ~\cite{greene_wu}) 
implies the existence of a function $f_2(k,v, D)$ such that each loop of length at most $2d$ can be contracted to a point via loops based at a fixed point of length at most $f_2(k, v, D)$. Similarly, let $M$ be a simply connected  Riemannian $4$-manifold with $|\mathrm{Ric}(M)| \leq 3$, $\vol \geq v$, $d=\diam(M) \leq D$. N. Wu and Z. Zhu demonstrated in \cite{wuzhu} that there exists a function $f_3(v, d)$ such that any closed curve of length at most $2d$ can be contracted to a point over based point loops of length at most $f_3(v,d)$. The existence of a bound that is linear in $m$ for the length of the shortest $m$ geodesics connecting a pair of points $p,q \in M$ readily follows from the existence of such functions. 
\par
Before we can state our result, we must first establish some notation. 
Given a manifold $M$ and a pair of points $p,q\in M$, we let $\Omega_{p,q} M$ denote the space of piecewise differentiable curves in $M$ connecting $p$ and $q$. We then define $\Omega_{p,q}^{L}M\subset \Omega_{p,q}M$ to be the subspace of such curves of length at most $L\geq 0$. We equip $\Omega_{p,q} M$ with the sup norm, so that $\dist(\alpha,\gamma)=\sup_{t}\dist(\alpha(t),\gamma(t))$ for $\alpha,\gamma\in \Omega_{p,q} M$.
\par
Our goal in this paper is to prove the following theorem.

\begin{Thm} 
    \label{TheoremAA}
    Let $M\in \mathscr{M}_{-1,v}^D(n)$ be simply connected and analytic with $\diam(M)=d$. Let $c>0$ such that any closed curve of length at most $2d$ on $M$ can be homotoped to some point over curves of length at most $cd$. Then given any $\delta>0$ and any continuous map $f: S^l \to \Omega_{p,q} M$, there exists  a rational function $G(n, v, D)$ and a map $g:S^l \to \Omega_{p,q}^{L}M$ homotopic to $f$, where 
      $$L\leq e^{ce^{G(n, v, D)}}+\delta.$$
\end{Thm}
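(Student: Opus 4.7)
The plan is to reduce the infinite-dimensional homotopy problem to a finite combinatorial one on a simplicial complex of piecewise geodesics, approximate $f$ by a simplicial map into this complex, and then run an inductive contraction argument whose per-step length cost is controlled by $c$ and whose number of steps is controlled by the geometric data $n,v,D$. First, applying the Bishop--Gromov inequality (valid under $\sec \geq -1$) together with $\vol(M) \geq v$ and $\diam(M) \leq D$, I would construct an $\epsilon$-net $N = \{p, x_1, \dots, x_{N_0}, q\} \ss M$ with $N_0$ bounded by a rational function of $n, v, D, \epsilon$, choosing $\epsilon$ much smaller than $\delta$. I would then form the simplicial complex $\mathscr{P}$ whose cells correspond to ordered tuples of net points realized as piecewise minimizing geodesics $p \to y_1 \to \cdots \to y_k \to q$; its total number of cells is at most $e^{G(n,v,D)}$ for a computable rational $G$.

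Next, I would use the uniform continuity of $f$ in the sup norm to triangulate $S^l$ finely enough that the image of every closed simplex has diameter less than $\epsilon$, and replace $f$ vertex by vertex by a simplicial map $\tilde f \colon S^l \to \mathscr{P}$. The resulting homotopy $f \simeq \tilde f$ traverses curves whose lengths differ from those in the image of $f$ by at most an additive error of order $D \cdot N_0$, which is absorbed into the $\delta$-term. The analyticity hypothesis on $M$ enters here to guarantee that the simplicial model $\mathscr{P}$ is nondegenerate (generic transversality of geodesic crossings) and that the shortening procedures produce genuinely piecewise differentiable curves, as required for the local-to-global combinatorial reductions in the next step.

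The heart of the argument, and the step I expect to be the main obstacle, is an inductive contraction within $\mathscr{P}$. Starting from the longest cells, I would push $\tilde f$ off each cell by a local homotopy whose key subroutine is the following: two adjacent piecewise geodesic approximants differ by insertion or removal of a single net point, and the concatenation of one with the reverse of the other forms a loop. After subdividing each such loop into sub-loops of length at most $2d$ using short paths to a basepoint (at additive cost $O(D)$ and multiplicative cost $O(1)$), the hypothesis on $c$ permits a contraction by curves of length at most $c$ times the loop length. Iterating through the at most $e^{G(n,v,D)}$ cells of $\mathscr{P}$ compounds the factor $c$ per step, yielding the claimed bound $L \leq e^{ce^{G(n,v,D)}} + \delta$ after swallowing lower-order terms. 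The principal difficulty is the bookkeeping: each local contraction can temporarily re-inflate cells that were already shortened, so one must design the induction --- likely by ordering cells of $\mathscr{P}$ by length and invoking a discrete Morse-theoretic scheme --- so that the number of re-inflations stays bounded by the combinatorial size of $\mathscr{P}$ and does not introduce a further exponential factor on top of the double exponential already present.
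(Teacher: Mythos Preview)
Your proposal has a genuine circularity that the paper avoids by a different mechanism.

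The complex $\mathscr{P}$ of piecewise-geodesic paths $p \to y_1 \to \cdots \to y_k \to q$ through net points is infinite: $k$ is unbounded. To get at most $e^{G(n,v,D)}$ cells you would need $k$ bounded in terms of $n,v,D$ alone, hence an a~priori length bound on the paths --- but that is exactly what the theorem is supposed to produce. When you ``replace $f$ vertex by vertex'' by piecewise geodesics, the number of segments in $\tilde f(v)$ is of order $\ell(f(v))/\epsilon$, which depends on $f$ and is not controlled by $n,v,D$. So the induction never gets started on a complex of the claimed size, and the ``at most $e^{G}$ cells'' count that drives your double exponential is unjustified.

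The paper sidesteps this by building the net not in $\Omega_{p,q}M$ but in $\Lambda^{cd}M$, the space of \emph{closed} curves of length at most $cd$. The point is that the hypothesis already hands you, for each short loop, a contracting homotopy through curves of length $\leq cd$; it is those intermediate curves, not the paths $f(x)$, that get approximated by the net. The net then has size $\tilde N \leq N^{O(cD/\epsilon)}$ with $\epsilon$ chosen as a fixed fraction of the Grove--Petersen radius $r(n,v,D)$, and this is where the $e^{c\,e^{G}}$ actually comes from. Crucially, consecutive net curves along the given homotopy are close in sup norm, and the paper uses the Grove--Petersen contractibility of small balls (not the $c$-hypothesis again) to interpolate between them by a homotopy of bounded \emph{width}, with a bound independent of $c$. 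After discarding repeats, at most $\tilde N$ such interpolations occur, yielding a global width bound $W(c,n,v,D)$ on the contraction of any loop of length $\leq 2d$. The skeleton-by-skeleton extension over $S^l$ then costs only an additive $O(W)$ per dimension, so $L = O(lW) + D + \delta$ with no further compounding. Your scheme of repeatedly invoking the $c$-hypothesis would, as you anticipate, re-inflate already-shortened pieces; the paper never faces this because each local step is governed by Grove--Petersen rather than by $c$.

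A minor point: analyticity is used in the paper only to ensure that a curve meets the cut locus of a fixed point in finitely many parameter values, so that a one-parameter family of minimizing geodesics to points along the curve has only finitely many discontinuities to patch. It is not used for transversality in a simplicial model.
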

\noindent
In fact, in Lemma \ref{lemma:homotopy_loop_width_bound} we show that $L=l(5W(c,n, v, D) +3D)+D+ \delta$, where $W$ is an explicitly defined function.
We can then extend this result to manifolds with an arbitrary lower sectional curvature bound $k$ as follows. Consider $M\in \mathscr{M}_{k,v}^D(n)$. After scaling the metric on $M$ by $|k|$, we obtain $M'\in \mathscr{M}_{-1,v'}^{D'}(n)$, where $v'=v|k|^n$, and $D'=|k|D$. Moreover, any closed curve of length at most $2d'$ on $M'$ can be homotoped to some point over curves of length at most $cd'$, where $d'=|k|d$. Then $M'$ satisfies Theorem \ref{TheoremAA}, so given any continuous map $f': S^l \to \Omega_{p,q} M'$, there exists a map $g':S^l \to \Omega_{p,q}^{L'}M'$ homotopic to $f'$, where $L'=2l(5W(c, n, v', D') +3D')+D'+ \delta$. Therefore, given any continuous map $f: S^l \to \Omega_{p,q} M$, there exists a map $g:S^l \to \Omega_{p,q}^{L'}M$
homotopic to $f$, as desired. We therefore have the following corollary.
\begin{Cor}
    \label{cor:TheoremAA}
    Let $M\in \mathscr{M}_{k,v}^D(n)$ be  simply connected with $k<0$ and $\diam(M)=d$. Let $c>0$ such that any closed curve of length at most $2d$ on $M$ can be homotoped to some point over curves of length at most $cd$. 
    Then given any $\delta>0$ and any continuous map $f: S^l \to \Omega_{p,q} M$, there exists a rational function $\tilde{G}(n,v, k, D)$ and a map $g:S^l \to \Omega_{p,q}^{L'}M$
    that is homotopic to $f$,
    where 
    \begin{align*}
      L'\leq e^{ce^{\tilde{G}(n, k, v, D)}}+\delta.
    \end{align*}
\end{Cor}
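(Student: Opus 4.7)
\begin{Pf}{Proof proposal for Corollary \ref{cor:TheoremAA}.}
The plan is to reduce to the case $k=-1$ by rescaling the metric and then invoke Theorem \ref{TheoremAA}. First I would set $\lambda=|k|$ and consider the manifold $M'$ obtained from $M$ by replacing the metric $g$ with $\lambda\, g$. Under this rescaling, sectional curvatures are multiplied by $\lambda^{-1}=1/|k|$, volumes by $\lambda^{n/2}$, and lengths (hence the diameter) by $\lambda^{1/2}$. Consequently $M'\in \mathscr{M}_{-1,v'}^{D'}(n)$ with $v'=v|k|^{n/2}$ and $D'=|k|^{1/2}D$, and the contractibility constant $c$ is preserved, since the inequality ``length at most $cd$'' scales uniformly on both sides. (The factors of $|k|$ versus $|k|^{1/2}$ should be double checked against the scaling convention already adopted in the paragraph preceding the corollary; the overall argument is identical.)

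Next I would transport the map $f\colon S^l\to \Omega_{p,q}M$ to a map $f'\colon S^l\to \Omega_{p,q}M'$ via the identity on the underlying point set; this is a homeomorphism of path spaces that multiplies all lengths by the appropriate power of $\lambda$. Applying Theorem \ref{TheoremAA} to $M'$ and $f'$ yields a homotopy to a map $g'\colon S^l\to \Omega_{p,q}^{L'}M'$ with
\[
L'\leq e^{c\,e^{G(n,v',D')}}+\delta',
\]
for any $\delta'>0$. Pulling this homotopy back through the rescaling and choosing $\delta'$ appropriately (so that the residual $\delta'$ term becomes the prescribed $\delta$ in the original metric), I obtain a homotopy of $f$ to a map into $\Omega_{p,q}^{L}M$ with $L$ obtained from $L'$ by dividing by the length scaling factor.

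Finally, I would define
\[
\tilde{G}(n,k,v,D):=G\bigl(n,\, v|k|^{n/2},\, |k|^{1/2}D\bigr)
\]
(adjusted by an additive constant coming from the logarithm of the length scaling factor, which only shifts $\tilde G$ by a rational function of $k$). Since $G$ is rational in its arguments and $v|k|^{n/2}$, $|k|^{1/2}D$ are rational functions of $(n,k,v,D)$ (after clearing the square roots by choosing notation that makes them rational in $|k|$ as the paper does), $\tilde G$ is also rational, and the stated bound follows. The only genuine step to check carefully is the bookkeeping for the scaling exponents of volume and diameter, together with the observation that the constant $c$ is scale invariant; neither is a real obstacle, but both must be written out explicitly to ensure $\tilde G$ is indeed rational in $(n,k,v,D)$ and that the exponent tower $e^{c\,e^{\tilde G}}$ absorbs the multiplicative length factor coming from undoing the rescaling.
\end{Pf}
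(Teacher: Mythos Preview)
Your approach is essentially identical to the paper's: rescale the metric so that the lower curvature bound becomes $-1$, observe that the constant $c$ is scale invariant, apply Theorem~\ref{TheoremAA} to the rescaled manifold, and then define $\tilde G$ by substituting the rescaled volume and diameter into $G$. The paper carries this out in the paragraph immediately preceding the corollary and records $\tilde G(n,k,v,D)=G(n,\,v|k|^n,\,D|k|)$; your scaling exponents $|k|^{1/2}$ and $|k|^{n/2}$ (coming from $g\mapsto |k|g$) differ from the paper's stated $|k|$ and $|k|^n$, and you were right to flag this, but it is purely a bookkeeping/convention issue and does not affect the structure of the argument.
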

\noindent
Here $\tilde{G}(n,k,v,D)=G(n, v|k|^n, D|k|)$.
\par
Recall that in \cite{serre} the proof of the existence of infinitely many geodesics uses Morse theory on the path space $\Omega_{p,q}M$. Using rational homotopy theory one can show that there exists a non-trivial even-dimensional spherical cohomology class $u$ in the rational cohomology ring of $\Omega_{p,q}M$ with non-trivial cup powers $u^i$. Either these cup powers correspond to different critical points of the energy functional on $\Omega_{p,q}M$ or, if $u^i$ and $u^j$ correspond to the same critical point, Lusternik--Schnirelmann theory establishes the existence of an entire critical level of geodesics.
\par 
In \cite{schwarz}, an effective version of this proof is presented that uses homology classes that are dual under the Pontryagin product, which is generated by concatenation of loops. In particular,  given a homology class $v$ dual to $u$-- that is, satisfying $\langle u,v \rangle=1$-- its Pontryagin powers are dual to the cup powers of $u$ up to some nonzero constant. Thus, either these Pontryagin powers correspond to different geodesics or they provide infinitely many geodesics of the same length.
\par 
With this in mind, Theorem \ref{TheoremAA} gives rise to a length bound on geodesics as follows. Given a simply connected Riemannian manifold $M\in\mathscr{M}_{k,v}^D(n)$, let $f:S^l\to \Omega_{p,q}M$ be a representative of the class $v$ described above. Note that it can be shown that $l\leq 2n-2$. By Corollary \ref{cor:TheoremAA}, $f$ is homotopic to a map whose image consists of curves of length at most 
\begin{align*}
    2l(5W(c, n, v|k|^n, D|k| +3D) + D|k|.
\end{align*}
Therefore for each $m\geq 1$, the Pontryagin power $v^m$ has a representative whose image consists of curves of length at most 
\begin{align*}
    m\left(2l(5W(c, n, v|k|^n, D|k| +3D) + D|k|\right).
\end{align*}
Each $v^m$ gives rise to a geodesic connecting $p$ to $q$ on $M$. We therefore have the following result.
\begin{Cor}
    Let $M$ be a simply connected Riemannian manifold in $\mathscr{M}_{k,v}^D(n)$ for $k<0$. Then any pair of points $p,q\in M$ are connected by $m$ geodesic segments of length at most
    $$me^{ce^{\tilde{G}(n,v, k,D)}},$$
    where $\tilde{G}(n,v, k,D)$ is a computable rational function.
\end{Cor}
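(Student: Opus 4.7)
The plan is to combine the rational homotopy / Lusternik--Schnirelmann machinery sketched in the paper with the length bound provided by Corollary \ref{cor:TheoremAA}. The key point is that a linear-in-$m$ length bound on geodesic representatives follows once one has (i) a single continuous representative $f:S^l\to\Omega_{p,q}M$ of a ``good'' homology class whose image consists of curves of bounded length, and (ii) the ability to produce representatives of its $m$-fold Pontryagin power by concatenation.

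First, I would set up the algebraic topology. Since $M$ is simply connected and closed of dimension $n$, standard rational homotopy theory (Serre, Sullivan) implies that $H^*(\Omega_{p,q}M;\mathbb{Q})$ contains a non-trivial even-degree spherical class $u$ all of whose cup powers $u^i$ are non-trivial, with $\deg u\leq 2n-2$. By duality with respect to the Pontryagin product on $H_*(\Omega_{p,q}M;\mathbb{Q})$, there is a class $v\in H_l(\Omega_{p,q}M;\mathbb{Q})$, $l=\deg u\leq 2n-2$, with $\langle u,v\rangle=1$, and its Pontryagin powers $v^m$ satisfy $\langle u^m,v^m\rangle\neq 0$; in particular each $v^m$ is non-zero. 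Represent $v$ by a continuous map $f:S^l\to\Omega_{p,q}M$.

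Next I would apply Corollary \ref{cor:TheoremAA} to $f$ (with any fixed small $\delta$, say $\delta=1$), obtaining a homotopic map $g:S^l\to\Omega_{p,q}^{L_1}M$ whose image consists of curves of length at most
\[
L_1\leq e^{ce^{\tilde{G}(n,k,v,D)}}+1.
\]
To form a representative of the $m$-fold Pontryagin power $v^m$, I would concatenate $m$ copies of $g$ through an auxiliary ``bridge'' at $p$ (or at $q$), i.e.\ build a map $S^{lm}\to\Omega_{p,q}M$ by joining $m$ curves each of length at most $L_1$; the resulting curves have length at most $mL_1$. Absorbing the additive $+1$ and constant factors into the definition of $\tilde{G}$ (and relabelling it, since it is already ``a computable rational function'' we are free to enlarge it by a constant), this gives a representative of $v^m$ whose image lies in $\Omega_{p,q}^{mL}M$ with $L\leq e^{ce^{\tilde{G}(n,v,k,D)}}$.

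Finally I would convert the existence of these representatives into the existence of $m$ geodesics of the stated length, via the Schwarz/Lusternik--Schnirelmann argument outlined in the excerpt. On the path space $\Omega_{p,q}M$ the energy functional is a Morse--Bott function whose critical points are geodesics from $p$ to $q$, and the min-max value
\[
\ell_m=\inf_{h\in v^m}\sup_{x\in S^{lm}}\operatorname{length}(h(x))
\]
is a critical value, realized by a geodesic of length at most $mL$. If the values $\ell_1<\ell_2<\cdots<\ell_m$ are pairwise distinct we get $m$ distinct geodesics directly; if two Pontryagin powers $v^i,v^j$ give the same critical value, then the Lusternik--Schnirelmann principle (applicable because the $v^i$ are dual to the non-trivial cup powers $u^i$) forces an entire critical set of dimension $\geq 1$ at that level, giving infinitely many geodesics of that common length, and in either case at least $m$ geodesics of length $\leq me^{ce^{\tilde{G}(n,v,k,D)}}$ exist. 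The main obstacle, and the only non-routine piece, is the careful bookkeeping of constants when passing from a single representative of $v$ to a concatenated representative of $v^m$ and ensuring that the ambient sup-norm homotopies provided by Corollary \ref{cor:TheoremAA} interact correctly with the Pontryagin product; everything else is a direct application of earlier results in the excerpt and classical Morse/L-S theory on path spaces.
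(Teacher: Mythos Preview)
Your proposal is correct and follows essentially the same argument the paper sketches in the paragraph preceding the corollary: represent the dual class $v$ by a sphere in $\Omega_{p,q}M$, shorten it via Corollary~\ref{cor:TheoremAA}, form Pontryagin powers by concatenation to get representatives of $v^m$ with length $\leq mL$, and invoke the Schwarz/Lusternik--Schnirelmann alternative to produce $m$ geodesics. One small notational slip: the natural carrier for $v^m$ is (the fundamental class of) $(S^l)^m$ rather than $S^{lm}$, but this does not affect the length estimate since every curve in the image is still a concatenation of $m$ curves of length at most $L_1$.
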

\noindent
We note that a similar bound can be established for non-simply connected manifolds $M^n$ by using the same argument as in ~\cite{nr7}, which involves using the universal cover $\widetilde{M}^n$ of $M^n$ with the covering metric. 

\begin{org} 
The paper is organized as follows. In Section \ref{2}, we establish the first step of the proof of Theorem \ref{TheoremAA} and provide a detailed outline of the remaining four steps.
The second step is proven in Section \ref{3} and the third and fourth  in Section \ref{4}. We prove the fifth and final step in Section \ref{5}, by showing that given a sphere in $\Omega_{p,q}M$, which potentially passes through some long paths, we can construct a homotopy between this sphere and a sphere in $\Omega_{p,q}^{2m(5W(c,n, v, D) +3D)+D+ \delta}M$.
\end{org}

\section{Preliminaries and Main Ideas}\label{2}

In this section we give a more detailed outline of the proof of Theorem \ref{TheoremAA}. Our first goal is to state Theorem \ref{lem:grove_petersen} below, which allows us to choose a cover of $M\in \mathscr{M}_{k,v}^D(n)$ by balls of radius strictly less than a certain function $r(n,k,v,D)$, where the number of balls in the cover is bounded by some function of $n, k, v$ and $D$. Before we do so, we first estimate the number of balls in such a cover.
\par
We begin by fixing $\epsilon>0$. Let $N$ be the maximal number of pairwise disjoint balls in $M$ of radius $\epsilon/12$. If such a set of metric balls is given by $\{B(p_i, {\epsilon/12})\}_{i=1}^N$ for some $p_i\in M$, then the set $\{B(p_i, {\epsilon/6})\}_{i=1}^N$ covers $M$. Note that if we remove any of the metric balls from 
$\{B(p_i, {\epsilon/6})\}_{i=1}^N$, it no longer covers $M$. Let $M^n_{\kappa}$ denote the $n$-dimensional space of constant curvature $\kappa$ and let $B_{\kappa}(\bar{p}_i, r)$ denote the corresponding ball of radius $r$ about $\bar{p}_i\in M^n_{\kappa}$. Using the Bishop--Gromov volume comparison theorem \cite{B} (see also Lemma 7.1.4 of \cite{Pet}), we obtain
    $$N\leq \frac{\vol(M)}{\vol(B(p_i,\epsilon/12))} \leq \frac{\vol(B_{-1}(\bar{p}_i, D))}{\vol(B_{-1}(\bar{p}_i, \epsilon/12))}.$$
Thus,
\begin{equation*}
    N \leq \frac{\int_{0}^D\sinh^{n-1}tdt}{\int_0^{\epsilon/12}\sinh^{n-1}t dt}=\phi(\epsilon,D,n)
\end{equation*}
Since $x\leq \sinh(x)\leq e^x/2$ for $x\geq 0$, we have 
\begin{equation}\label{N}
   N\leq \phi(\epsilon,D,n) \leq\frac{\int_{0}^D e^{t(n-1)}dt}{2^{n-1}\int_0^{\epsilon/12}t^{n-1} dt}
    \leq     
    \frac{12^n n e^{D(n-1)}}{2^{n-1}(n-1)\epsilon^{n}}.
\end{equation}
Therefore, for any $\epsilon>0$, there is a cover of $M$ by metric balls of radius $\epsilon/6$ where the number of balls is bounded in terms of $\epsilon, D$ and $n$. 
\par
To establish Theorem \ref{lem:grove_petersen}, we begin by recalling the following definition. 
\begin{Def}[{\bf Width}]\label{width}
    Let $H: X \times [0,1] \to M$ be a homotopy. We define its width to be $W_H=\sup_{x \in X} \ell(H(x,\tau))$, where $L$ is the length functional. In other words, the width of a homotopy is the maximum length of the trajectory of a point of $X$ during the homotopy. 
\end{Def}
\noindent
Let $M$ be a simply connected Riemannian manifold in $\mathscr{M}_{-1,v}^D(n)$. Theorem 1.6 in \cite{gp} allows us to cover $M$ by metric balls that can each be contracted within a larger metric ball of bounded radius. In particular, the authors show that there exist functions $r=r(n,k,v,D)$ and $R=R(n,k,v,D)$ such that there is a differentiable strong deformation retraction $H:\Delta(r)\times [0,1]\to \Delta(r)$ of $\Delta(r)$ onto the diagonal $\Delta\subset M\times M$ with $\ell(H((p,q),\cdot)) \leq R \dist(p,q)$, where 
\begin{align*}
    \Delta(r) = \{(p,q) \mid p,q \in M, \dist(p,q)< r\}.
\end{align*}
\noindent
We restate the theorem here adapted to our purposes, noting that the trajectory of the point $(p,q)$ under this retraction has length $R\dist(p,q)$, and hence the retraction has width at most $Rr$. In particular, given any $p\in M$, $H((p,\cdot),\cdot)$ is a strong deformation retraction of $B(p, r)$ onto $p$ of width $R\dist(p,q)$ for any $q\in B(p, r)$. Thus given any $p\in M$ and $0<\epsilon < r$ the ball $B(p, \epsilon)$ is contractible inside the ball centered at $p$ of radius $R\epsilon$, leading to the following result.

\begin{Thm}
    \cite{gp}
    \label{lem:grove_petersen}
    Suppose $M\in\mathscr{M}^D_{k,v}(n)$. Then there exist functions $r=r(n,k,v,D)$ and $R=R(n,k,v,D)$ such that any ball of radius $\epsilon < r$ in $M$ is contractible inside the concentric ball of radius $R\epsilon$ by a homotopy with width at most $R\epsilon$. 
\end{Thm}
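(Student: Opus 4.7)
The plan is to derive this theorem as an almost immediate specialization of Theorem~1.6 in \cite{gp}, whose substantive content has already been quoted in the paragraph preceding the statement. That result supplies the scalar functions $r=r(n,k,v,D)$ and $R=R(n,k,v,D)$ together with a differentiable strong deformation retraction $H\colon\Delta(r)\times[0,1]\to\Delta(r)$ of the $r$-tubular neighborhood of the diagonal onto $\Delta\subset M\times M$ satisfying the length bound $\ell(H((p,q),\cdot))\le R\,\dist(p,q)$, and moreover with the feature that the curve $t\mapsto H((p,q),t)$ connects $(p,q)$ to the diagonal point $(p,p)$ within $\Delta(r)$. All of the substantive Riemannian geometry is packaged into \cite{gp}; what remains here is a bookkeeping exercise in projection and constant-tracking.

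For $p\in M$ and $0<\epsilon<r$, I would define the candidate contraction $h_p\colon B(p,\epsilon)\times[0,1]\to M$ by
\[
    h_p(q,t)\;=\;\pi_2\bigl(H((p,q),t)\bigr),
\]
where $\pi_2\colon M\times M\to M$ is the second-coordinate projection. By the strong deformation retraction axioms together with the observation above, $h_p(q,0)=q$, $h_p(q,1)=p$, and $h_p(p,t)=p$ for all $t$, which is precisely a contraction of $B(p,\epsilon)$ to the basepoint $p$.

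The width bound is then immediate from the Grove--Petersen trajectory estimate. Since the curve $H((p,q),\cdot)$ in $M\times M$ (with the product metric) has length at most $R\,\dist(p,q)<R\epsilon$, and projection onto either factor does not increase length, we obtain $\ell(h_p(q,\cdot))\le R\epsilon$ for every $q\in B(p,\epsilon)$. Finally, for containment in the concentric ball of radius $R\epsilon$, I would note that each trajectory $h_p(q,\cdot)$ starts within distance $\epsilon$ of $p$ and has total length at most $R\epsilon$, so by the triangle inequality its image lies in $B(p,(1+R)\epsilon)$; absorbing the extra additive factor by replacing $R$ with $R+1$ at the outset (which preserves the dependence of $R$ on $n,k,v,D$) yields containment inside $B(p,R\epsilon)$ as stated. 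The only mild subtlety throughout is this constant-tracking, and no new analytic input beyond Grove--Petersen is required.
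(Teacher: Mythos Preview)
Your proposal is correct and matches the paper's own derivation, which appears in the paragraph immediately preceding the theorem statement rather than in a separate proof environment. Both you and the paper specialize Theorem~1.6 of \cite{gp} by fixing the first coordinate $p$ and reading off the contraction $q\mapsto H((p,q),t)$ together with its trajectory-length bound; your only addition is the explicit constant adjustment $R\mapsto R+1$ to guarantee containment in $B(p,R\epsilon)$, a bookkeeping detail the paper leaves implicit.
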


\begin{remark} \label{NR}
    The fourth author shows in the proof of Theorem B in \cite{rotman2000} that for $M \in \mathscr{M}_{-1,v}^D(n)$, one can take 
    \begin{align*}
        r(n,-1,v,D) &= c_1(n) \frac{v\min\{1,v\}}{De^{(n-1)D}}\\
        R(n,-1,v,D) &= c_2(n) \frac{e^{(n-1)D}}{v}
    \end{align*}
    for some constants $c_1(n)$ and $c_2(n)$.
\end{remark}
\par
We may now use Theorem \ref{lem:grove_petersen} with $\epsilon= r(n,k,v,D)/(2a)$ for any constant $a>2$ to produce a cover of $M$ by contractible balls of radius strictly less than $r(n,k,v,D)$, where the bound on the number of balls $N$ is given by Inequality \ref{N}.
\par
Define $\Lambda M$ as the space of all piecewise differentiable closed curves in $M$ parameterized by $[0,1]$ with constant speed. Let $\Lambda^{L}M$ be the space of curves in $\Lambda M$ of length at most $L$. For the second step of the proof of Theorem \ref{TheoremAA}, we establish in Lemma \ref{lemma:net} that given any $\epsilon,L>0$ we can construct an $\epsilon$-net $\Phi$ of $\Lambda^{L}M$ in the larger space $\Lambda^{3L}M$. In this context, an $\epsilon$-net covering a subset $M'\subset M$ is a set of points $\{p_i\}$ in $M$ such that $\{B(p_i, \epsilon)\}$ covers $M'$ (see Definition \ref{e-net}). 
Thus, given any piecewise differentiable curve closed $\gamma$ of length at most $L$, there is a curve $\eta \in \Phi$ of length at most $3L$ such that $\dist(\gamma, \eta) < \epsilon$. Moreover, the number of elements in $\Phi$ will be bounded in terms of $\epsilon, k, v$ and $D$. Recall that the nerve of a cover of $M$ is an abstract simplicial complex approximating $M$, recording the pattern of intersections between sets in the cover. The $\epsilon$-net that we construct here consists of short closed curves in the $1$-skeleton of the nerve of the cover of $M$ obtained using Theorem \ref{lem:grove_petersen}.
\par
For the third step, we consider a homotopy $\gamma_{\tau}, \tau \in [0,1]$, that contracts a closed piecewise differentiable curve $\gamma=\gamma_0$ of length at most $2d$ over curves of length at most $cd$ for some constant $c$. Our goal in Lemma \ref{lemma:homotopy_loop_width_bound} is to approximate this homotopy using curves in our $\epsilon$-net $\Phi$, where we take $\epsilon=r(n,k,v,D)/2a$ and $L=cd$. In order to do so, we first subdivide $[0,1]$ into intervals $0=\tau_0 <\tau_1 <\ldots<\tau_m=1$ so that $\dist(\gamma_{\tau_i}, \gamma_{\tau_{i+1}}) < \epsilon$. We find a closest element in the $\epsilon$-net $\Phi$ to each $\gamma_{\tau_i}$ and denote it by $\eta_i$. If in this sequence we have $\eta_i = \eta_j$ for some $i< j$, we discard the terms $\eta_i,\ldots, \eta_{j-1}$, thus obtaining a new sequence in which all elements are different. Thus, the number of elements in this sequence will be bounded by the number of elements in $\Phi$, and hence by $N$, the number of metric balls in our cover of $M$.
\par 
For the fourth step, using Lemma \ref{lemma:close_curves_homotopy_bounded_width} we construct a homotopy between $\eta_i$ and $\eta_{i+1}$ with width bounded by a function of $n,k,v$ and $D$. This is possible because $\eta_i$ and $\eta_{i+1}$ are close. Then in Lemma \ref{lemma:homotopy_loop_width_bound} we construct a new homotopy between $\gamma(t)$ and a point  by first homotoping $\gamma(t)$ to $\eta_0(t)$, then by piecing together the homotopies between $\eta_i$ and $\eta_{i+1}$ for all $i$, and finally homotoping $\eta_m(t)$ to a point. Finally, in Lemma \ref{lemma:homotopy_final_bounds}, we construct a based-point homotopy contracting $\gamma$ to the point curve $\gamma(0)$ through curves based at $\gamma(0)$ that also have short length.
\par
For the fifth and final step of the proof of Theorem \ref{TheoremAA}, we consider any map $f:S^l\to \Omega_{p,q}M$ and then define a new map $g:S^l\to \Omega_{p,q}^LM$ that is homotopic to the original $h$. To obtain this homotopy, we divide $S^m$ into small $m$-cubes and define the map inductively on the $k$-skeleton. We prove the base case in the proof of Theorem \ref{TheoremAA} by interpolating between points on the 0-skeleton using the homotopies obtained via Lemmas \ref{lemma:homotopy_final_bounds} and \ref{Lemmapathhomotopy}. The proof of the inductive step is accomplished in Lemma \ref{Lemmashortpaths}.

\section{Constructing a net in the space of closed curves of length at most $cd$}\label{3}

Recall that our goal over the next two sections is to establish the following result. Given any piecewise differentiable closed curve of length at most $2d$ that can be contracted to a point via curves of length at most $cd$, for some constant $c$, we show how to contract it to some other point through short loops based at that point. In order to do so we will make use of the existence of a cover of $M$ by a bounded number of contractible balls provided by Theorem \ref{lem:grove_petersen}.
We want to convert this cover to a cover of $\Lambda^L M$, the space of piecewise differentiable closed curves of length at most $L$ parametrized by the unit interval with constant speed, inside the larger space $\Lambda M$ of all piecewise differentiable closed curves parametrized by the unit interval with constant speed. More specifically, we will find a cover by metric balls centered at curves of length at most $3L$. The set of these curves is called a net, whose definition we now recall.
\begin{Def}
    \label{e-net}
    Given any $\epsilon>0$, an $\epsilon$-net covering a susbet $Y$ of a metric space $X$ is a subset of points $\{x_i\}_{i\in I}\subset X$ such that the collection of metric balls $\{B(x_i, \epsilon)\}_{i\in I}$ covers $Y$.
\end{Def}
\noindent 
We describe the procedure in the following lemma, which is adapted from Lemma 3.4 in \cite{rotman2000}. We include the proof for the sake of completeness. This result applies to any cover of a compact manifold by finitely many metric balls and any $L>0$. Later, we will apply this lemma to our special cover and for a prescribed value of $L$.

\begin{Lem}\cite{rotman2000}
    \label{lemma:net}
    Pick $\epsilon>0$ and let $\{B(p_i, \frac{\epsilon}{6} )\}_{i=1}^N$ be a cover of a compact manifold $M$ such that the balls $\{B(p_i, \frac{\epsilon}{12} )\}_{i=1}^N$ are mutually pairwise disjoint. Given $L>0$, there exists an $\epsilon$-net $\Phi$ on $\Lambda^L M\subset \Lambda M$ with $\widetilde{N} \leq N^{\frac{18L}{\epsilon}+1}$ elements. Moreover, the length of every closed curve in $\Phi$ is at most $3L$.
\end{Lem}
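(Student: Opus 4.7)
The plan is to construct $\Phi$ as a family of short piecewise-geodesic closed curves whose vertices come from the $N$ centers of the given cover. Fix an integer $k$ of order $L/\epsilon$ and partition $[0,1]$ into $k$ equal pieces $[t_j,t_{j+1}]$ with $t_j=j/k$. For any $\gamma\in\Lambda^L M$, constant speed and $\ell(\gamma)\leq L$ give $\dist(\gamma(t_j),\gamma(t_{j+1}))\leq L/k$, and because the $\epsilon/6$-balls cover $M$ we can pick $i(j)\in\{1,\dots,N\}$ with $\gamma(t_j)\in B(p_{i(j)},\epsilon/6)$ for each $j=0,\dots,k-1$. I would then let $\eta_\gamma$ be the closed polygonal curve whose $j$-th edge is the minimizing geodesic between $p_{i(j)}$ and $p_{i(j+1)}$ (with $i(k)=i(0)$), reparametrized to constant speed on $[0,1]$ so that $\eta_\gamma\in\Lambda M$. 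The net $\Phi$ is the set of all such $\eta_\gamma$, of cardinality at most $N^k$, since each of the $k$ vertex indices is chosen from at most $N$ possibilities.

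The length and closeness bounds then follow from the triangle inequality. For length, $\dist(p_{i(j)},p_{i(j+1)})\leq \epsilon/6+L/k+\epsilon/6=L/k+\epsilon/3$, so $\ell(\eta_\gamma)\leq L+k\epsilon/3$, which is at most $3L$ provided $k\leq 6L/\epsilon$. For closeness, using the natural piecewise parametrization (where $\eta_\gamma$ sits at $p_{i(j)}$ at time $t_j$): for $t\in[t_j,t_{j+1}]$ the point $\gamma(t)$ lies within $L/k+\epsilon/6$ of $p_{i(j)}$, while $\eta_\gamma(t)$ lies within $L/k+\epsilon/3$ of $p_{i(j)}$, giving a sup distance at most $2L/k+\epsilon/2$, which is strictly less than $\epsilon$ whenever $k>4L/\epsilon$. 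Any $k$ in the overlap $(4L/\epsilon,6L/\epsilon]$ works.

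The main obstacle I foresee is reconciling this estimate with the globally constant-speed reparametrization required for membership in $\Lambda M$. Since the edge lengths $\ell_j$ are not all equal, the constant-speed reparametrization shifts the parameter at which $\eta_\gamma$ visits $p_{i(j)}$ away from $t_j=j/k$. I expect this is handled by taking $k$ somewhat larger than the minimal value, controlling the discrepancy between $t_j$ and the shifted vertex times via the uniform bound $\ell_j\leq L/k+\epsilon/3$, and propagating the resulting time shift through the sup-norm estimate; the edge case $L<\epsilon$ is handled separately by taking $\Phi$ to be the constant loops at the $p_i$. This additional slack should account for the constant $18$ in the exponent of the stated bound $|\Phi|\leq N^{18L/\epsilon+1}$, compared with the sharper $\sim 6L/\epsilon$ that the piecewise analysis alone would give.
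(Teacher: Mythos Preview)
Your construction is essentially the paper's: piecewise-geodesic closed curves with vertices among the centers $p_i$, one vertex near each subdivision point of $\gamma$. Two remarks on where you diverge.

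First, your guess about the origin of the constant $18$ is wrong. The paper introduces no extra slack for reparametrization; instead it defines $\Phi$ as the set of \emph{all} closed concatenations of minimizing segments between centers $p_i,p_j$ with $\dist(p_i,p_j)\leq\epsilon/2$ and total length at most $3L$, and then invokes the disjointness of the $\epsilon/12$-balls---a hypothesis you never use---to obtain the lower bound $\dist(p_i,p_j)\geq\epsilon/6$ for $i\neq j$. Hence each edge has length at least $\epsilon/6$, so a loop of length at most $3L$ has at most $18L/\epsilon$ edges, giving $|\Phi|\leq N^{18L/\epsilon+1}$. Your version, which fixes the number of vertices at some $k\leq 6L/\epsilon$ in advance, actually yields a smaller net and a sharper exponent; the $18$ is an artifact of the paper's order of quantifiers (defining $\Phi$ first, then bounding edge count via the lower bound on edge length), not of any analytic necessity.

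Second, on the constant-speed worry: the paper does not address it either. It parametrizes the approximant $\sigma$ piecewise, traversing the $i$-th segment $\sigma_i$ on $[t_i,t_{i+1}]$, and bounds the sup-distance by observing that for $t\in[t_i,t_{i+1}]$ one has $\dist(\gamma(t),q_i)\leq\epsilon/3$ while \emph{every} point of $\sigma_i$ lies within $\epsilon/2$ of $q_i$; thus $\dist(\gamma(t),\sigma(t))\leq 5\epsilon/6$ irrespective of how $\sigma$ is parametrized within each subinterval. Since the only downstream use of the net is through this pointwise sup-distance, the global constant-speed requirement is inessential, and you need not spend the exponent on it.
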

\begin{proof}
    Consider all minimizing geodesic segments joining pairs of centers of balls $(p_i, p_j)$ in our cover that satisfy $\dist(p_i,p_j)\leq \frac{\epsilon}{2}$. Note that $\dist(p_i,p_j)\geq \epsilon/6$ for $i\not=j$ by assumption. Let $\Phi$ be the collection of closed curves of total length at most $3L$ formed by connecting such geodesic segments. Note that the length restriction on $\Phi$ means that
    each such curve consists of at most $18L/\epsilon$ geodesic segments. We claim that the set $\Phi$ is the desired $\epsilon$-net.
    \par 
    Given $\gamma\in\Lambda^L M$, we need to find an element in $\Phi$ within distance $\epsilon$ of $\gamma$. Pick $m\leq 18L/\epsilon$ and subdivide the interval $[0,1]$ by $0=t_0<t_1<\ldots<t_{m-1}<t_{m}=1$ so that $\ell(\gamma|_{[t_i,t_{i+1}]})<\epsilon/6$. Let $q_{i}$ be a point in the set $\{p_j\}_{j=1}^N$ that is closest to $\gamma(t_i)$, with $q_0=q_{m}$. Because $\{B(p_i, \frac{\epsilon}{6})\}_{i=1}^N$ covers $M$, $\dist(q_i,\gamma(t_i))\leq \epsilon/6$ and hence 
    \begin{align*}
        \dist(q_i,q_{i+1})
        &\leq \dist(q_i,\gamma(t_i))+\dist(\gamma(t_i),\gamma(t_{i+1}))+\dist(q_{i+1},\gamma(t_{i+1}))\\
        &\leq \epsilon/2.
    \end{align*}
    Let $\sigma_i$ be a minimizing geodesic segment joining $q_i$ and $q_{i+1}$ (see Figure \ref{fig:net}). Because $m\leq 18L/\epsilon$, the curve $\sigma=\sigma_0*\sigma_1*\ldots*\sigma_{m-1}$ has length at most $3L$ and so lies in $\Phi$. We claim that $\gamma$ lies in an $\epsilon$-neighbourhood of $\sigma$. For $t\in[t_i,t_{i+1}]$, the point $\gamma(t)$ is within distance $\epsilon/3$ of $q_i$ by the triangle inequality, since 
    \begin{align*}
        \dist(\gamma(t),q_i)\leq \dist(\gamma(t),\gamma(t_i))+\dist(\gamma(t_i),q_i)\leq \epsilon/3.
    \end{align*}
    Because each $\sigma_i$ has length at most $\epsilon/2$, again using the triangle inequality, we see that $\dist(\gamma(t),\sigma_i(s))\leq 5\epsilon/6<\epsilon$. Therefore $\Phi$ is indeed an $\epsilon$-net.
    \par 
    Lastly, we estimate the number of curves in $\Phi$. Each curve in $\Phi$ is formed by at most $18L/\epsilon$ segments, and hence is uniquely defined by a choice of at most $18L/\epsilon+1$ points in the set $\{p_i\}_{i=1}^N$. Therefore there are fewer than $N^{\frac{18L}{\epsilon}+1}$ curves in $\Phi$, as claimed.
\end{proof}
\begin{figure}
    \includegraphics[width=\textwidth]{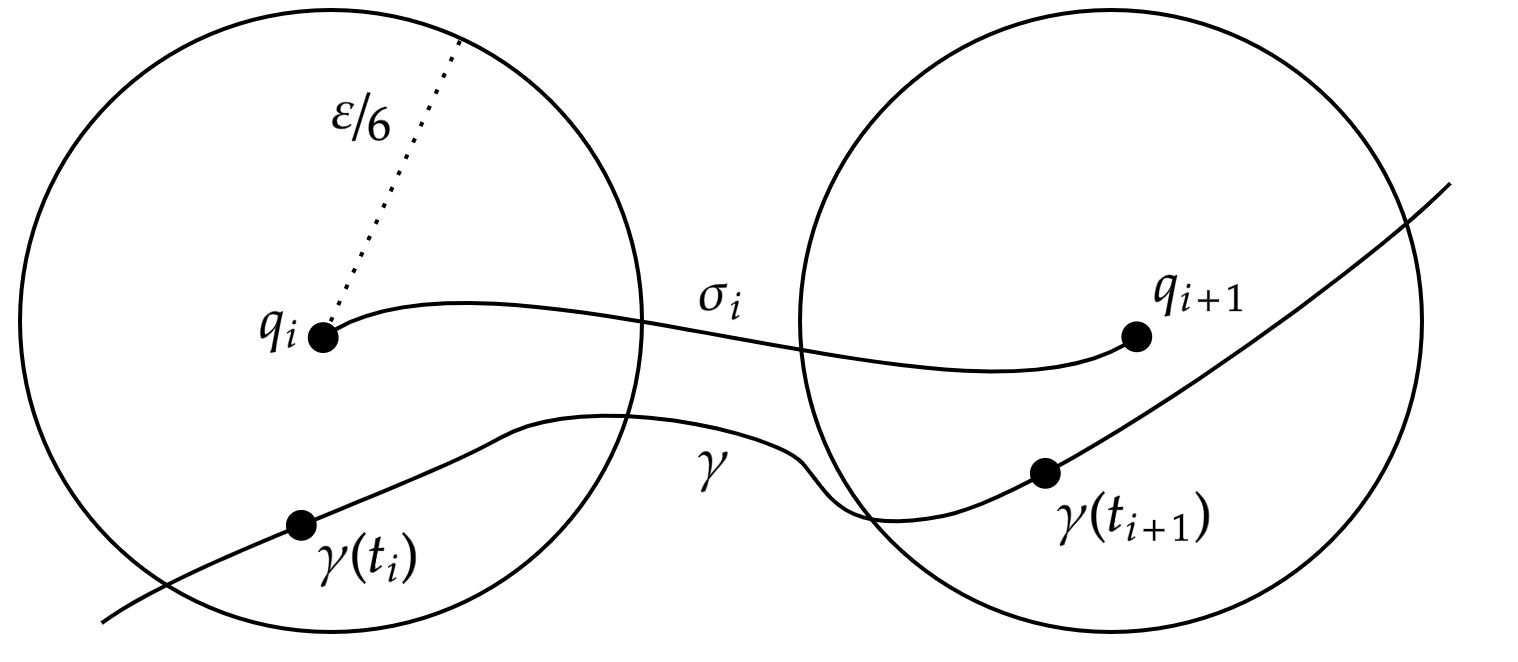}
    \caption{Illustration of the proof of Lemma \ref{lemma:net}.} 
    \label{fig:net}
\end{figure}

Given two sufficiently close curves in $\Lambda M$, we now explain how to find a homotopy between them of short width. In particular, this will allow us to homotope between nearby elements of the net from the previous lemma. We accomplish this in the next lemma, which is an adaptation of Part 2 of Lemma 3.3 due to the fourth author in \cite{rotman2000}. We again include the proof here for the sake of completeness. 

\begin{Lem}\cite{rotman2000}\label{lemma:close_curves_homotopy_bounded_width}
    Pick $a\in \mathbb{R}$ such that $2< a$. Let $\alpha(t), \gamma(t)$ be two closed curves on $M\in \mathscr{M}^D_{k,v}$ such that $\dist(\alpha(t), \gamma(t)) < r(n,k,v,D)/a$. Then there exists a free homotopy between $\alpha(t)$ and $\gamma(t)$ of width at most $4R(n, k, v, D)r(n, k, v, D)/a+2r(n, k, v, D)/a$.
\end{Lem}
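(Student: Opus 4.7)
The plan is to construct the required free homotopy by applying Theorem \ref{lem:grove_petersen} directly along the pair of curves. Since $a > 2$, the hypothesis $\dist(\alpha(t), \gamma(t)) < r/a$ places $(\alpha(t), \gamma(t))$ inside $\Delta(r)$ for every $t$, so the strong deformation retraction $H : \Delta(r) \times [0,1] \to \Delta(r)$ from Theorem \ref{lem:grove_petersen} can be evaluated along the continuous path $t \mapsto (\alpha(t), \gamma(t))$. Writing $c_p(q,\tau)$ for the second coordinate of $H((p,q),\tau)$, this theorem packages the family of contractions of the balls $B(p,r)$ onto their centers as a single jointly continuous map, with each trajectory $\tau \mapsto c_p(q,\tau)$ of length at most $R\,\dist(p,q)$.

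The construction I would use is to set $\Gamma(t,\tau) = c_{\alpha(t)}(\gamma(t),\, 1-\tau)$. The endpoint conditions $\Gamma(t,0) = \alpha(t)$ and $\Gamma(t,1) = \gamma(t)$ are immediate, joint continuity of $c$ gives continuity of $\Gamma$ in $(t,\tau)$, and for each fixed $t$ the trajectory $\tau \mapsto \Gamma(t,\tau)$ has length at most $R\,\dist(\alpha(t),\gamma(t)) \le Rr/a$, comfortably within the claimed bound $4Rr/a + 2r/a$.

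The only mildly delicate point is confirming that $c_p$ varies continuously in the center $p$, which could fail for an independently chosen family of contractions but is automatic from the way Theorem \ref{lem:grove_petersen} packages them as a single continuous retraction on $\Delta(r)$. A construction that more directly produces the stated bound, and that foreshadows the finite-cover machinery used in Lemma \ref{lemma:net}, would instead route each trajectory through two cover centers $p_i, p_j$ with $\alpha(t) \in B(p_i, r/a)$ and $\gamma(t) \in B(p_j, r/a)$, joined by a minimizing geodesic of length at most $2r/a$; each contraction is then of a ball of radius up to $2r/a$ and contributes up to $2Rr/a$, giving $4Rr/a + 2r/a$ in total. In that variant the main obstacle becomes stitching the homotopies across the boundaries of the partition of $[0,1]$ on which $(i,j)$ are locally constant, which I would handle by inserting short linking paths between the old and new cover centers on a small neighborhood of each transition point.
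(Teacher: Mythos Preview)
Your first construction is correct and in fact yields the sharper width bound $Rr/a$, well inside the stated $(4R+2)r/a$. It is, however, a genuinely different argument from the paper's. You exploit the fact that the ball-contractions $c_p$ assemble into a single jointly continuous retraction of $\Delta(r)$, which lets you slide $\gamma(t)$ to $\alpha(t)$ simultaneously for all $t$. The paper uses only the weaker content of Theorem~\ref{lem:grove_petersen} as restated---that each individual ball $B(p,\epsilon)$ with $\epsilon<r$ is contractible inside $B(p,R\epsilon)$ with width at most $R\epsilon$---and assembles the homotopy by hand: it subdivides $[0,1]$ so that $\alpha$ and $\gamma$ each move less than $r/a$ on subintervals, joins $\alpha(t_i)$ to $\gamma(t_i)$ by minimizing geodesics $\sigma_i$, and then contracts the ``rectangle'' loops $P_i=-\gamma|_{[t_i,t_{i+1}]}*-\sigma_i*\alpha|_{[t_i,t_{i+1}]}*\sigma_{i+1}$, each of which sits in a ball of radius $2r/a$ about $\alpha(t_i)$. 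The two $r/a$ contributions come from inserting the $\sigma_i$ and then the $\gamma$-segments; the $4Rr/a$ comes from contracting each $P_i$ with basepoint held fixed. Your route is shorter and sharper; the paper's route is more portable, since it requires no coherence among the ball-contractions and would go through whenever one has only local contractibility rather than a global retraction of $\Delta(r)$.

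Your second sketch, routing through cover centers $p_i,p_j$, is neither what the paper does (no cover appears in this lemma) nor complete: the assignment $t\mapsto(i,j)$ is discontinuous, and the ``short linking paths'' you propose to insert at transitions must themselves be absorbed into a bounded-width homotopy, which brings you back to contracting small loops in small balls---exactly the paper's mechanism. If you want to hit the stated constant $(4R+2)r/a$ on the nose, follow the subdivision-plus-geodesics argument rather than introducing the cover.
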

\begin{figure}[h]
    \includegraphics[width=\textwidth]{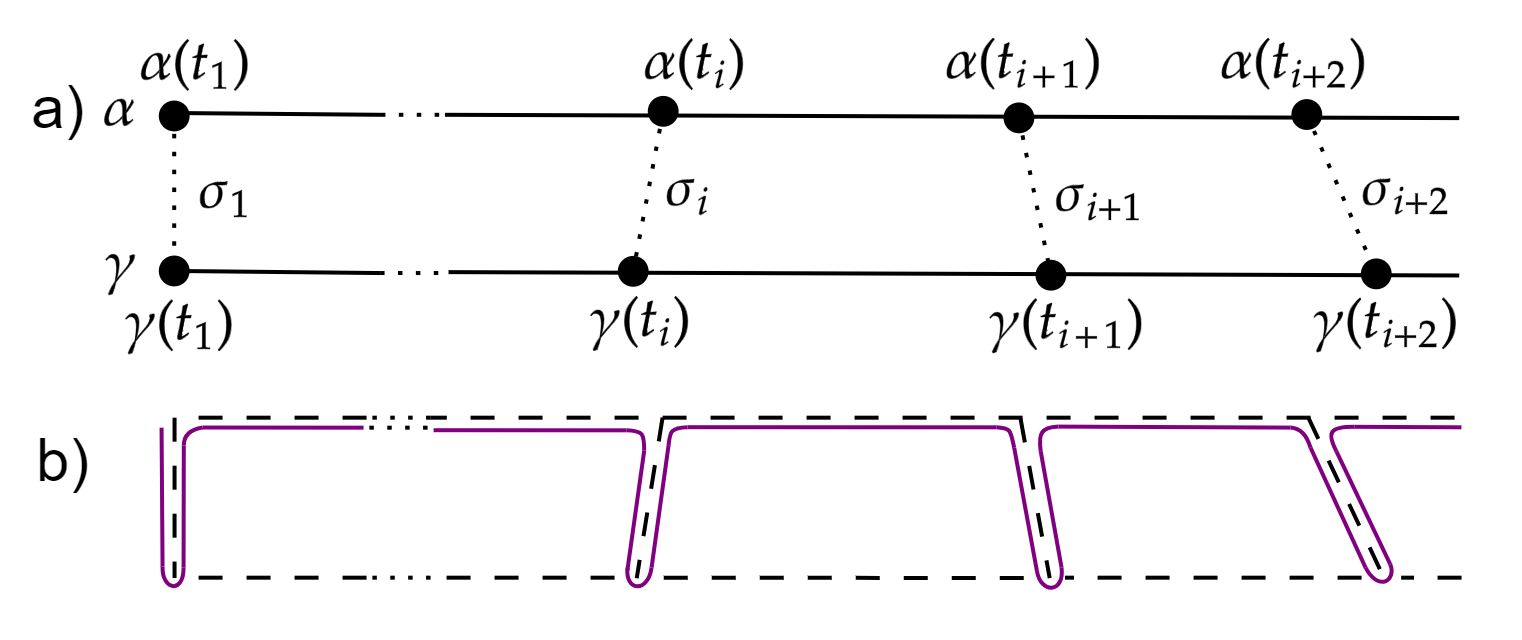}
    \caption{An illustration of (a) the curves $\sigma_i$ and (b) the curve $\widetilde{\alpha}$ in the proof of Lemma \ref{lemma:close_curves_homotopy_bounded_width}.} 
    \label{fig:close_curves_homotopy_bounded_width_ab}
\end{figure}
\begin{proof}
    Since $M\in \mathscr{M}^D_{k,v}$, we may choose  $r=r(n,k,v,D)$ and $R=R(n,k,v,D)$ as in Theorem \ref{lem:grove_petersen}. Reparameterize $\alpha$ and $\gamma$ by the unit interval.
    Subdivide the interval $[0,1]$ by $0=t_0<t_1<\ldots<t_{m-1}<t_{m}=1$, choosing the $t_i$ so that
        \begin{align*}
        \max_i\{\dist(\alpha(t_i),\alpha(t_{i+1})),\dist(\gamma(t_i),\gamma(t_{i+1}))\}<r/a.
    \end{align*}
    Let $\sigma_i$ be a minimizing geodesic connecting $\alpha(t_i)$ to $\gamma(t_i)$ (see Figure \ref{fig:close_curves_homotopy_bounded_width_ab} (a)). Then for every $i$ the closed curve
    $$P_i=-\gamma|_{[t_i,t_{i+1}]}*-\sigma_{i}*\alpha|_{[t_i,t_{i+1}]}*\sigma_{i+1}$$ lies within the closure of $B(\alpha(t_i), 2r/a)$, since it has length at most $4r/a$ and passes through $\alpha(t_i)$.
    \par
    Now, $\alpha$ is homotopic to 
    $$\widetilde{\alpha}=\alpha|_{[t_1,t_{2}]}*\sigma_{2}*-\sigma_{2}*\ldots *\alpha|_{[t_{m-1},t_{m}]}*\sigma_{m}*-\sigma_{m},$$
    via a homotopy of width at most $\max_i\{\ell(\sigma_i)\}< r/a$, simply by extending along $\sigma_{i}|_{[0,t]}*-\sigma_i|_{[0,t]}$ simultaneously for every $i$ (see Figure \ref{fig:close_curves_homotopy_bounded_width_ab} (b)). Similarly, if we define
    $$\Gamma_i=\alpha|_{[t_i,t_{i+1}]}*\sigma_{i+1}*\gamma|_{[t_{i+1},t_{i+2}]}*-\gamma|_{[t_{i+1},t_{i+2}]}*-\sigma_{i+1},$$
    then 
    $\widetilde{\alpha}$ is homotopic to $\Gamma_1*\ldots *\Gamma_{m}$ through a homotopy of width less than $r/a$, since $\dist(\gamma(t_i),\gamma(t_{i+1}))<r/a$ (see Figure \ref{fig:close_curves_homotopy_bounded_width_cd} (a)). However, after reparameterization, $\Gamma_1*\ldots *\Gamma_{m}$ is simply the curve 
    $$\gamma|_{[t_{1},t_{2}]}*P_1*\gamma|_{[t_{2},t_{3}]}*P_2*\gamma|_{[t_{3},t_{4}]}*\ldots * P_m*\gamma|_{[t_{m-1},t_{m}]}.$$
    If  we can contract each loop $P_i$ while fixing its basepoint $\gamma(t_{i+1})$ (see Figure \ref{fig:close_curves_homotopy_bounded_width_cd}(b)), then we have the desired homotopy. Because $P_i$ lies in the closure of the ball of radius $2r/a<r$, by Theorem \ref{lem:grove_petersen} it is contractible inside the ball of radius $R(2r/a)$ through a homotopy of width $R(2r/a)$. Therefore we can contract $P_i$ while fixing the basepoint by conjugating the homotopy with the trajectory of $P_i(0)$, which requires a width of at most $4Rr/a$. Therefore the total width of our homotopy is at most $(4R+2)r/a$. 
\end{proof}
\begin{figure}[h]
    \includegraphics[width=\textwidth]{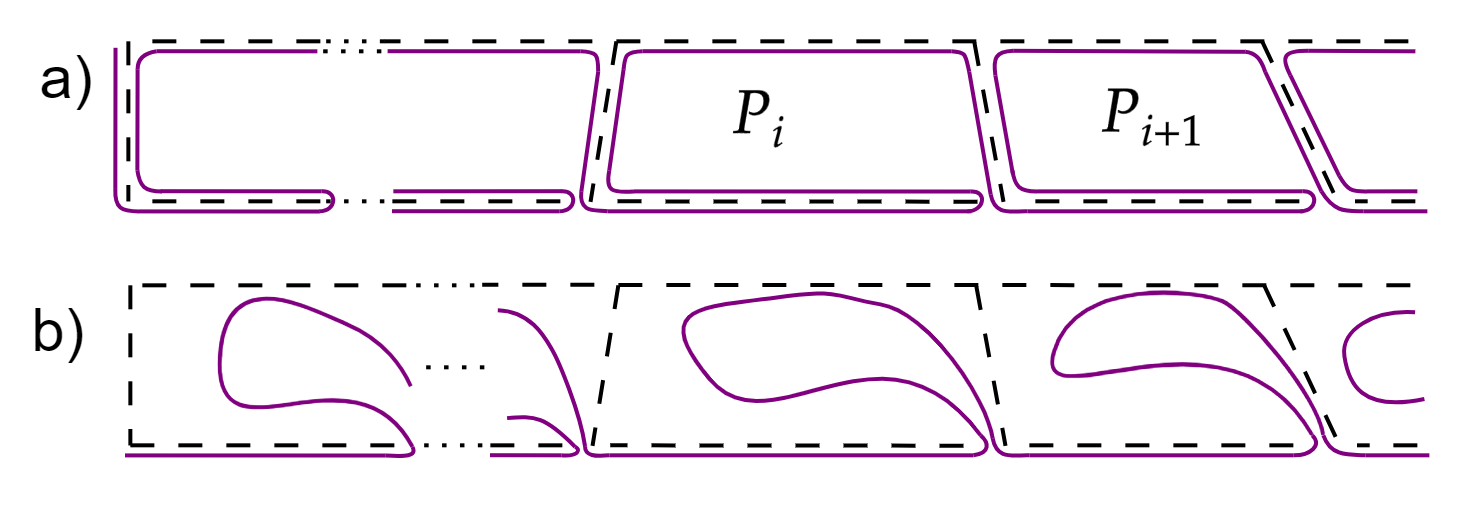}
    \caption{An illustration of (a) the loops $P_i$ and (b) the contraction of each $P_i$ in the proof of Lemma \ref{lemma:close_curves_homotopy_bounded_width}.} 
    \label{fig:close_curves_homotopy_bounded_width_cd}
\end{figure}

\section{Bounding Width and Length}\label{4}

In the previous section, we showed that we can homotope between two sufficiently close curves in $\Lambda M$ through a homotopy of short width. We now wish to convert this into a homotopy through curves that also have short length.

\begin{Lem} \label{lemma:short_hom}
    Let $\gamma(t)$ be a closed piecewise differentiable curve on a closed Riemannian manifold $M$. Suppose that there exists a homotopy $H(t, \tau)=\gamma_{\tau}(t)$ 
    of width $W$ contracting $\gamma(t)=H(t, 0)$ to a point. 
    Then given any $\epsilon>0$, there exists a homotopy, denoted $H'(t, \tau)$, contracting $\gamma$ to a point of width $W$ such that the length of each curve $t\mapsto H'(t, \tau)$ is at most $\ell(\gamma)+3W+\epsilon$.
\end{Lem}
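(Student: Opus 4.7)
My plan is to construct the new homotopy $H'$ as a reparametrization of the original one: set $H'(t,\tau)=H(t,f(t,\tau))$ for a carefully chosen schedule function $f:[0,1]\times[0,1]\to[0,1]$ with $f(t,0)=0$ and $f(t,1)=1$ for every $t$. If $f$ is taken monotone in $\tau$ for each fixed $t$, the width of $H'$ at the parameter $t$ is $\ell(\tau\mapsto H(t,f(t,\tau)))\le \ell(\tau\mapsto H(t,\tau))\le W$, so the width bound is preserved automatically. Hence the work is entirely in arranging the $t$-dependence of $f$ so that each intermediate curve has length at most $\ell(\gamma)+3W+\epsilon$.

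The key idea is to design $f(\cdot,\tau)$ to resemble a step function in $t$: equal to $0$ on a large portion of $[0,1]$ (where $H'(\cdot,\tau)$ coincides with $\gamma$), equal to a ``roof'' value close to $1$ on another portion (where $H'(\cdot,\tau)$ sits near the contraction endpoint $p$), and transitioning rapidly in between. Along each narrow transition region the graph $\{(t,f(t,\tau))\}$ is approximately vertical in the $(t,\tau')$-square, so its image under $H$ is essentially a piece of a single trajectory $\tau'\mapsto H(t_0,\tau')$ and therefore has length at most $W$. As $\tau$ increases, the roof grows to cover all of $[0,1]$, so that $H'(\cdot,1)$ is the constant curve $p$. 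The $\gamma$-portion of $H'(\cdot,\tau)$ contributes at most $\ell(\gamma)$, and each transition region contributes at most $W$ plus a smoothing error, yielding the target bound $\ell(\gamma)+3W+\epsilon$.

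The main obstacle is handling the closure condition $H'(0,\tau)=H'(1,\tau)$, the degenerate limits at $\tau=0,1$ (which must produce exactly $\gamma$ and a single point, respectively), and the simultaneous continuity of $f$ in both variables. One must arrange the roof/transition decomposition to evolve continuously from ``no roof'' at $\tau=0$ to ``full roof'' at $\tau=1$, while ensuring that the smoothing errors---controlled by a sufficiently fine partition of $[0,1]$ in the $t$-variable---aggregate to less than $\epsilon$. The three trajectory contributions (the two edges of the roof plus one additional trajectory needed to reconcile closure with the contraction at the endpoints $\tau=0,1$) account for the $3W$ term. Verifying the length estimate reduces to bounding a line integral of $|\partial_\tau H|$ along the graph of $f$; the bound follows by applying the width hypothesis slice-by-slice along each near-vertical transition, combined with the observation that on the roof $H(t,1)=p$ contributes zero length.
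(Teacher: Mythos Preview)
Your approach is correct and shares its core geometric idea with the paper's proof: each intermediate curve of the new homotopy is the image under $H$ of a path in the parameter square $[0,1]_t\times[0,1]_{\tau'}$ that stays near the boundary, so that horizontal pieces along $\tau'=0$ contribute at most $\ell(\gamma)$, horizontal pieces along $\tau'=1$ contribute nothing, and near-vertical pieces contribute at most $W$ each. The implementations differ. You realize this path as the graph of a smoothed step function $f(\cdot,\tau)$ and set $H'(t,\tau)=H(t,f(t,\tau))$; monotonicity of $f$ in $\tau$ then gives the width bound immediately. The paper instead takes the limiting ``unsmoothed'' picture: it fixes a fine partition $0=t_0<\cdots<t_m=1$ with $\ell(\gamma_{\tau'}|_{[t_{i-1},t_i]})<\epsilon$ for all $\tau'$, and for $\tau$ in the $i$-th subinterval defines the new curve as an explicit concatenation of the full trajectory $\eta_0(\tau')=\gamma_{\tau'}(0)$, a partial trajectory $\eta_i$, a short arc $\gamma_s|_{[t_i,t_{i+1}]}$, a partial trajectory $\eta_{i+1}$, and the tail $\gamma|_{[t_{i+1},1]}$. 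Because the vertical pieces are literal trajectories, no smoothing error enters and the bound $\ell(\gamma)+3W+\epsilon$ drops out directly. Your route buys a one-line width argument; the paper's buys cleaner length estimates with no approximation to manage. One small correction: your roof must sit at height exactly $1$, not merely ``close to $1$'', since for $h<1$ the curve $\gamma_h$ may have uncontrolled length; it is only at $h=1$ that the roof portion contributes zero.
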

   \begin{figure}[b]
    \includegraphics[width=0.65\textwidth]{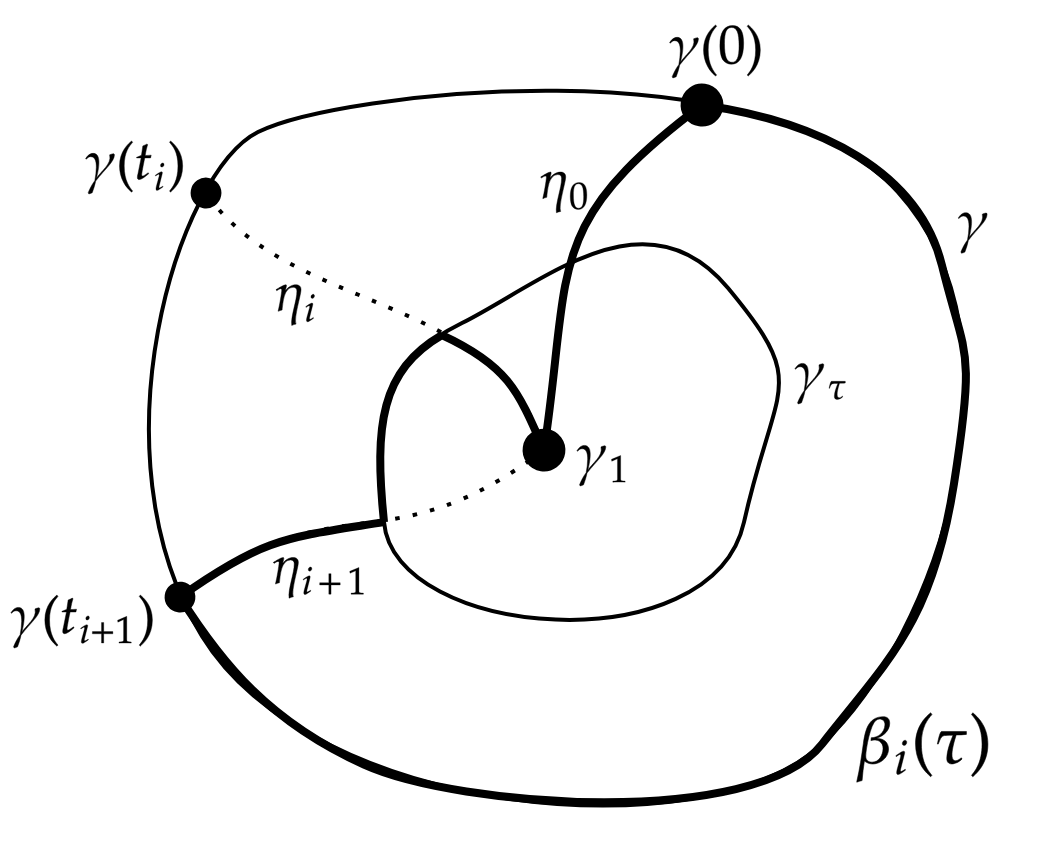}
    \caption{An illustration of the curve $\beta_i$ in the proof of Lemma \ref{lemma:short_hom}.} 
    \label{fig:short_hom}
\end{figure}
\begin{proof}
    Essentially, we will divide $\gamma$ into short segments $\gamma|_{[t_{i-1}, t_{i}]}$ and consider the triangles bounded by $\gamma|_{[t_{i-1}, t_{i}]}$ and the point $\gamma_1$. We then use the homotopy to contract these triangles individually. Pick $m$ sufficiently large and subdivide the interval $[0,1]$ by $0=t_0<t_1<\ldots<t_{m-1}<t_{m}=1$ so that $\ell(\gamma_\tau|_{[t_{i-1}, t_{i}]})<\epsilon$ for all $\tau$. Let $\eta_i(\tau)=\gamma_\tau(t_i)$ be the trajectory of the point $\gamma_0(t_i)$ under our original homotopy, noting that $\eta_0=\eta_{m}$. We define a new homotopy as follows:
    \begin{align*}
        H'(t, \tau)=
        \begin{cases}
            \beta_1(t) & \tau\in [0, t_1]\\
            \vdots & \hskip1cm\vdots\\
            \beta_i(t) & \tau\in[t_{i-1}, t_{i}]\\
            \vdots & \hskip1cm\vdots\\
            \beta_{m}(t) & \tau\in[t_{m-1}, t_{m}]\\
        \end{cases}
    \end{align*}
    where we have
    \begin{align*}
        \beta_1(t) 
        &=\eta_0|_{[0,\tau/t_1]}*\gamma_\tau|_{[0,t_1]}*-\eta_1|_{[0, \tau/t_1]}*\gamma|_{[t_1,1]}\\
        \beta_{m}(t) 
        &=\eta_0|_{[0,f_m(\tau)]}*-\eta_{m}|_{[0,f_m(\tau)]}
    \end{align*}
    and for $0<i<m$ we have
    \begin{align*}
        \beta_i 
        &=\eta_0*-\eta_{i}|_{[0, f_i(\tau)]}*\gamma_\tau|_{[t_{i},t_{i+1}]}*-\eta_{i+1}|_{[0,f_i(\tau)]}*\gamma|_{[t_{i+1},1]},
    \end{align*}
    where $f_i(\tau)=\frac{t_{i}-\tau}{t_{i}-t_{i-1}}$.
    The width of $\tau\mapsto H'(t, \tau)$ is at most $W$, because each point moves no farther than it does under the original homotopy. Moreover, the length of each curve $t\mapsto H'(t,\tau)$ is at most $3W+\ell(\gamma)+\epsilon$, as claimed.
\end{proof}

In the previous lemmas, we have been considering curves in $\Lambda M$, which by definition do not have any specific base point. Because we are ultimately interested in $\Omega_{p,q} M$, that is, curves with endpoints on $p$ and $q$, we will need homotopies that pass through loops with a fixed base point. First, we show how to convert a free homotopy to a homotopy through loops with a fixed basepoint without significantly increasing length.

\begin{Lem} \label{lemma:homotopy_loop_length_bound} 
    Let $\gamma(t)$ be a closed piecewise differentiable curve of length $L$ on a closed Riemannian manifold $M$. Suppose that there exists a homotopy $H(t, \tau)=\gamma_{\tau}(t)$ of width $W$ contracting $H(t, 0)=\gamma(t)$ to a point over curves of length at most $L$.
    Then there exists a homotopy, $H'(t, \tau)$, contracting $\gamma$ to $\gamma(0)$ over loops based at $\gamma(0)$ such that the length of loops in this homotopy is at most $L+2W$.  
\end{Lem}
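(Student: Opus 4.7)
The plan is to conjugate the given free homotopy by the trajectory of the basepoint. Let $\eta(\tau)=H(0,\tau)=\gamma_\tau(0)$ be the curve traced out by the basepoint during the contraction. Since the original homotopy has width $W$, we have $\ell(\eta|_{[0,\tau]})\leq W$ for every $\tau\in[0,1]$.

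The main construction proceeds in two phases. In the first phase, for $\tau\in[0,1]$, I would define
\begin{align*}
    H'(t,\tau)=\eta|_{[0,\tau]}*\gamma_\tau*-\eta|_{[0,\tau]},
\end{align*}
appropriately reparametrized on $[0,1]$ and traversed at constant speed. Each such curve is a loop based at $\eta(0)=\gamma(0)$ whose length is at most $\ell(\eta|_{[0,\tau]})+\ell(\gamma_\tau)+\ell(\eta|_{[0,\tau]})\leq W+L+W=L+2W$. At $\tau=0$ this is exactly $\gamma$, and at $\tau=1$ it is $\eta*\mathrm{const}_{\gamma_1(0)}*-\eta$, which reduces to $\eta*-\eta$.

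In the second phase, I would contract the loop $\eta*-\eta$ to the constant loop at $\gamma(0)$ by shrinking from the outside in: for $s\in[0,1]$, set
\begin{align*}
    H'(t,1+s)=\eta|_{[0,1-s]}*-\eta|_{[0,1-s]},
\end{align*}
again reparametrized on $[0,1]$. Each such loop is based at $\gamma(0)$ and has length at most $2W\leq L+2W$, and at $s=1$ it is the constant loop at $\gamma(0)$. Concatenating the two phases and rescaling the parameter back to $[0,1]$ yields the desired based homotopy.

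The argument is essentially just the standard trick of conjugating a free loop by a path to fix its basepoint, so I do not expect a serious obstacle. The only care needed is in the bookkeeping: verifying continuity of $H'$ at $\tau=1$ between the two phases (both give $\eta*-\eta$ there), verifying that each concatenation is piecewise differentiable, and checking that the length estimates are uniform in $\tau$. None of these require additional geometric input beyond the hypothesis that $\ell(\gamma_\tau)\leq L$ and the width bound on $\eta$.
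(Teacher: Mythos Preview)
Your proposal is correct and essentially identical to the paper's proof: both conjugate the free homotopy by the basepoint trajectory $\eta$ in the first phase and then contract $\eta*-\eta$ along itself in the second, with the same length estimate $L+2W$. The only cosmetic difference is that the paper parametrizes the two phases on $[0,\tfrac12]$ and $[\tfrac12,1]$ directly, whereas you concatenate on $[0,2]$ and rescale.
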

\begin{figure}[h]
    \includegraphics[width=0.95\textwidth]{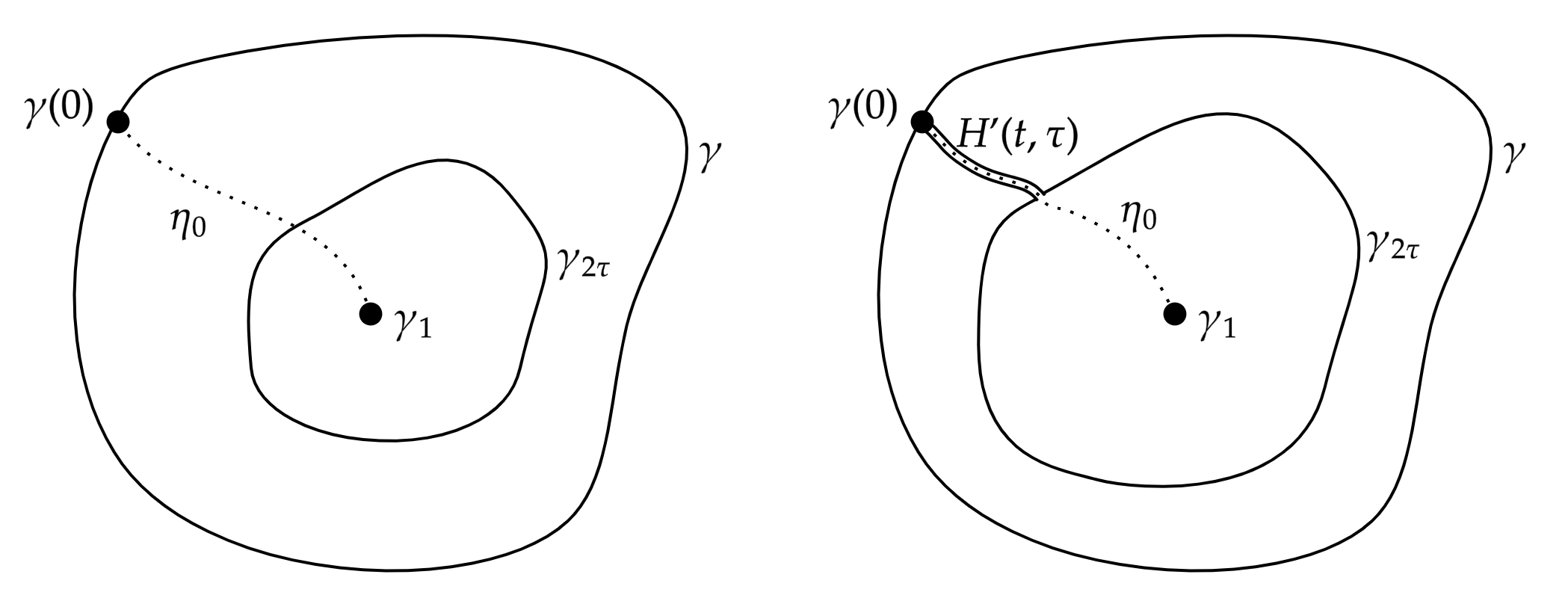}
    \caption{ Illustration of the first half of the homotopy $H'(t, \tau)$ of Lemma \ref{lemma:homotopy_loop_length_bound}.}
    \label{fig:homotopy_loop}
\end{figure}
\begin{proof}
    As in the previous lemma, let $\eta_0(\tau)=\gamma_\tau(0)$ be the trajectory of the point $\gamma(0)$ under the homotopy $\gamma_\tau$. We now define a new homotopy $H'(t, \tau)=\tilde{\gamma}_\tau$ that will contract $\gamma$ to a point, as follows.
    $$
    H'(t, \tau)=
    \begin{cases} 
    \eta_0|_{[0,2\tau]}*\gamma_{2\tau}*-\eta_0|_{[0,2\tau]}, &\tau\in [0, 1/2]
    \\
    \eta_0|_{[0,2-2\tau]}*-\eta_0|_{[0,2-2\tau]}, &\tau\in[1/2, 1].
    \end{cases}
    $$
    Note that the first half of $H'(t, \tau)$ homotopes $\gamma$ to $\eta_0*-\eta_0$ since $\gamma_1$ is a point curve (see Figure \ref{fig:homotopy_loop}). The second half of  $H'(t, \tau)$ homotopes $\eta_0*-\eta_0$ to the point curve $\eta_0(0)=\gamma(0)$.
    The curves in the new homotopy $\tilde{\gamma}_\tau$ have length at most $L+2\ell(\eta)\leq L+2W$ and are all based at $\gamma(0)$, as claimed.
\end{proof}

We now return to the bounded geometry setting and apply the previous lemmas to the $\epsilon$-net we obtained in Lemma \ref{lemma:net}.

\begin{Lem} \label{lemma:homotopy_loop_width_bound}
    Let $M \in \mathscr{M}_{-1,v}^D(n)$ with $\diam(M)=d$. Let $\gamma(t)$ be a piecewise differentiable closed curve on $M$ of length at most $2d$, parametrized on the unit interval. Suppose there is a homotopy $H(t, \tau)=\gamma_\tau$ from $\gamma=\gamma_0$ to a point that passes through curves of length at most $cd$. Then there is a function $W(c,n,v,D)$ such that there exists a homotopy, $H'(t, \tau)$, of width $W(c,n,v,D)$ contracting $\gamma$ to a point. 
\end{Lem}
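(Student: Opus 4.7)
The plan is to follow the five-step outline from Section~\ref{2} and stitch together short-width homotopies supplied by the earlier lemmas. First I would set up the combinatorial data: invoke Theorem~\ref{lem:grove_petersen} to obtain $r=r(n,-1,v,D)$ and $R=R(n,-1,v,D)$, fix a constant $a>2$ large enough that $3r/(2a)<r/a'$ for some $a'>2$ (say $a=4$, $a'=5/2$), and set $\epsilon=r/(2a)$. Using Inequality~\ref{N}, produce a cover of $M$ by $N\le\phi(\epsilon,D,n)$ balls of radius $\epsilon/6$ whose concentric $\epsilon/12$-balls are pairwise disjoint. Apply Lemma~\ref{lemma:net} with $L=cd$ to this cover, obtaining an $\epsilon$-net $\Phi\subset\Lambda^{3cd}M$ of $\Lambda^{cd}M$ with $|\Phi|\le N^{18cd/\epsilon+1}$ elements, each of length at most $3cd$. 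Every quantity in sight is now an explicit function of $c,n,v,D$.

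Next I would approximate the given homotopy $\gamma_\tau$ by curves in $\Phi$. By uniform continuity of $H$, choose a partition $0=\tau_0<\tau_1<\cdots<\tau_m=1$ with $\dist(\gamma_{\tau_i},\gamma_{\tau_{i+1}})<\epsilon$, and for each $i$ pick $\eta_i\in\Phi$ with $\dist(\eta_i,\gamma_{\tau_i})<\epsilon$; note $\eta_0$ can be taken near $\gamma=\gamma_{\tau_0}$ and $\eta_m$ near the point curve $\gamma_{\tau_m}$. The key move, as flagged in the outline, is to prune repetitions: whenever $\eta_i=\eta_j$ with $i<j$, discard $\eta_i,\ldots,\eta_{j-1}$, iterating until all remaining $\eta_k$ are distinct. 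The resulting reduced sequence has length $m'\le|\Phi|$, and by the triangle inequality its consecutive pairs still satisfy $\dist(\eta_k,\eta_{k+1})<3\epsilon<r/a'$.

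I would then build $H'$ as a concatenation. Apply Lemma~\ref{lemma:close_curves_homotopy_bounded_width} to construct (i) a free homotopy from $\gamma$ to $\eta_0$ and (ii) a homotopy between each consecutive pair $\eta_k,\eta_{k+1}$; each has width at most $W_0:=(4R+2)r/a'$. For the final piece, observe that $\gamma_{\tau_m}$ is a constant curve at some $p\in M$, hence $\eta_m$ lies in $B(p,\epsilon)$; since $\epsilon<r$, Theorem~\ref{lem:grove_petersen} contracts this ball to $p$ by a homotopy of width at most $R\epsilon$, which induces a contraction of $\eta_m$ to $p$ of width at most $R\epsilon$. Concatenating these pieces yields a homotopy $H'$ from $\gamma$ to a point. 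Because the trajectory of each parameter point under the concatenation is the join of its trajectories under the pieces, the width of $H'$ is at most the sum of the individual widths, namely
$$W_{H'}\le (m'+1)W_0+R\epsilon\le (|\Phi|+1)(4R+2)r/a'+R\epsilon,$$
and the right-hand side is the desired function $W(c,n,v,D)$.

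The main obstacle is making the bound on $m'$ depend only on $c,n,v,D$ and not on the particular homotopy $H$: the subdivision size forced by uniform continuity of $H$ is not controlled a priori, so naively summing widths over the $m$ pieces gives no uniform bound. The duplicate-removal step is precisely what converts this into the uniform bound $m'\le|\Phi|$, since $|\Phi|$ is computable from $c,n,v,D$ via Lemma~\ref{lemma:net} and the Bishop--Gromov estimate~\eqref{N}. Once this is arranged the remaining work is bookkeeping of constants from Theorem~\ref{lem:grove_petersen}, Lemma~\ref{lemma:net}, and Lemma~\ref{lemma:close_curves_homotopy_bounded_width}.
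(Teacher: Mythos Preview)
Your proposal is correct and follows essentially the same approach as the paper: build the $\epsilon$-net from Lemma~\ref{lemma:net} over the Grove--Petersen cover, approximate the given homotopy at finitely many times by net curves, prune duplicates to bound the number of steps by $|\Phi|$, and concatenate the short-width homotopies from Lemma~\ref{lemma:close_curves_homotopy_bounded_width}. The only cosmetic differences are your choice of constants ($\epsilon=r/(2a)$ versus the paper's $r/(4a)$, and the auxiliary $a'$) and your handling of the last step (you contract $\eta_m$ directly via Theorem~\ref{lem:grove_petersen}, whereas the paper applies Lemma~\ref{lemma:close_curves_homotopy_bounded_width} to the pair $(\xi_{\tau_m},\gamma_{\tau_m})$); both are valid and yield the same form of bound.
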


\begin{proof}
    Since $M \in \mathscr{M}_{-1,v}^D(n)$, we define  $r=r(n,-1,v,D)$ and $R=R(n,-1,v,D)$ as in Theorem \ref{lem:grove_petersen}. Choose $2<a$ and define $\epsilon=r /(4a)$. Recall by the discussion of Theorem \ref{lem:grove_petersen} that given any $\epsilon>0$ we can cover $M$ by the collection of balls $\{B(p_i, \frac{\epsilon}{6})\}_{i=1}^N$ where $N$ is bounded by the function $\psi(\epsilon,D,n)$ by Equation \ref{N}. This is possible due to the assumption that $M$ has curvature bounded below by $-1$. Let $\Phi=\{\xi_i\}_{i=1}^{\widetilde{N}}$ be the $\epsilon$-net covering $\Lambda^{cD} M$ from Lemma \ref{lemma:net} that corresponds to this cover, where we recall that
    \begin{equation*}
        \widetilde{N} \leq N^{18cD/\epsilon + 1} \leq \psi(\epsilon,D,n)^{18cD/\epsilon + 1} .
    \end{equation*}
    \par 
    By assumption, there is a homotopy $\gamma_\tau$ from $\gamma$ to a point that passes through curves of length at most $cd$. We modify $\gamma_\tau$ to produce a new homotopy with bounded width. Subdivide $[0,1]$ into intervals $0=\tau_0 <\tau_1 <\ldots<\tau_m=1$ so that $\dist(\gamma_{\tau_i}, \gamma_{\tau_{i+1}}) \leq r /(2a)$. Pick a closest element in the net $\Phi$ to $\gamma_{\tau_i}$ and call it $\xi_{\tau_i}$. If in this sequence we have $\xi_{\tau_i} = \xi_{\tau_j}$ for some $i< j$, discard the terms $\xi_{\tau_i},\ldots, \xi_{\tau_{j-1}}$, thus obtaining a new sequence in which all elements are different. Thus the number of elements in this sequence will be bounded by the number of elements in $\Phi$, and hence by $\psi(\epsilon,D,n)^{18cD/\epsilon + 1}$. 
    \par
    Since $\dist(\gamma_{\tau_i}, \gamma_{\tau_{i+1}})\leq r /(2a)$ and $\dist(\gamma_{\tau_i}, \xi_{\tau_i})\leq \epsilon$, we have 
    \begin{align*}
        \dist(\xi_{\tau_i}, \xi_{\tau_{i+1}})< r /(2a) + 2\epsilon
        <r/a.
    \end{align*}
    Therefore, by Lemma \ref{lemma:close_curves_homotopy_bounded_width}, there is a homotopy between $\xi_i$ and $\xi_{i+1}$ of width at most $(4R+2)r/a$. Similarly, because $\dist(\gamma_{\tau_i}, \xi_{\tau_i})\leq r/(4a)$ there is a homotopy between $\gamma=\gamma_{\tau_0}$ and $\xi_{\tau_0}$ and a homotopy between $\xi_{\tau_m}$ and the point $\gamma_{\tau_m}$ both with widths at most $(2R+1)r/(2a)$.
    We combine these homotopies to contract $\gamma$ to a point as follows. First, we homotope $\gamma$ to $\xi_{\tau_0}$, then homotope between $\xi_{\tau_i}$ and $\xi_{\tau_{i+1}}$ for all $i$ in sequence. Finally, we homotope $\xi_{\tau_m}$ to the point curve $\gamma_{\tau_m}=\gamma_1$. The width of the total homotopy is bounded by the sum of the individual widths, which is at most 
    \begin{align*}
        W(c,n,v,D) &=2(2R+1)r/(2a)+\widetilde{N}(4R + 2)(r/a)\\
       & \leq
        2(\widetilde{N}+1)(2R + 1)(r/a)\\
        &
        \leq
        2\left(\psi(r/a,D,n)^{18cD/(r/a) + 1}+1\right)(2R + 1)(r/a)
    \end{align*}
    where $\psi$ is defined as in Equation \ref{N}.
   Since this bound holds for any $a>2$ and $r\leq D$, using the estimate from Remark \ref{NR} we obtain 
    \begin{align*}
         \label{eqn:width}
         W(c,n,v,D)
         &<
        \left(\psi(r/a,D,n)^{18cD/(r/a) + 1}+1\right)(2R + 1)D\\
        &\leq 
        \left(\left( 
        \frac{12^n n e^{D(n-1)}2}{(n-1)r^{n}}
        \right)^{32cD/r + 1}+1\right)\left(2c_1(n) \frac{e^{(n-1)D}}{v} + 1\right)D
        \\
        & < e^{ce^{G(n,v,D)}}
  \end{align*}
  for some rational function $G$.
\end{proof}
We now combine the above results to show that a free homotopy through short curves can be converted to a homotopy through short based loops. 
\begin{Lem}
    \label{lemma:homotopy_final_bounds}
    Let $M \in \mathscr{M}_{-1,v}^D(n)$. Let $\gamma(t)$ be a piecewise differentiable closed curve on $M$ of length at most $2d$, where $d$ is the diameter of $M$. Suppose there is a homotopy $\gamma_\tau$ from $\gamma$ to a point that passes through curves of length at most $cd$. 
    Then given any $\epsilon>0$, there exists a homotopy contracting $\gamma$ to $\gamma(0)$ over loops based at $\gamma(0)$ such that the length of any loop in this homotopy is at most $2D+5W(c,n,v,D)+\epsilon$.
\end{Lem}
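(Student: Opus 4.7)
The plan is to chain together the three preceding lemmas from Section \ref{4}. Starting from the hypothesized homotopy $\gamma_\tau$ that contracts $\gamma$ to a point through curves of length at most $cd$, the goal is to successively improve it: first gain control on width, then control on length of intermediate curves, then convert from a free homotopy to one through loops based at $\gamma(0)$.

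First I would apply Lemma \ref{lemma:homotopy_loop_width_bound} to the homotopy $\gamma_\tau$. Since $\gamma$ has length at most $2d$ and the homotopy passes through curves of length at most $cd$, this produces a new homotopy $H_1(t,\tau)$ contracting $\gamma$ to a point whose width is bounded by the function $W=W(c,n,v,D)$. However, at this stage the curves appearing in $H_1$ have no explicit length bound; only the total displacement of each point is controlled.

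Next I would apply Lemma \ref{lemma:short_hom} to $H_1$, with a prescribed error tolerance $\epsilon>0$. This yields a homotopy $H_2(t,\tau)$ of width still at most $W$ that contracts $\gamma$ to a point, with the added feature that every intermediate curve has length at most $\ell(\gamma)+3W+\epsilon\le 2d+3W+\epsilon$. At this point $H_2$ is still a free homotopy, so it need not pass through loops based at $\gamma(0)$.

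Finally, I would apply Lemma \ref{lemma:homotopy_loop_length_bound} to $H_2$, using $L=2d+3W+\epsilon$ and width bound $W$. This produces a based-point homotopy $H_3$ contracting $\gamma$ to $\gamma(0)$ through loops of length at most $L+2W=2d+5W+\epsilon$. Since $d\le D$, this is bounded by $2D+5W(c,n,v,D)+\epsilon$, as required. No step is really the obstacle here since all of the substantive work was done in the earlier lemmas; the only care needed is tracking that the width bound $W$ from Lemma \ref{lemma:homotopy_loop_width_bound} is preserved through the subsequent modifications, so that Lemma \ref{lemma:homotopy_loop_length_bound} can be invoked with the same $W$ in its final additive term $2W$.
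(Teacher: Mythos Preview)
Your proposal is correct and matches the paper's proof essentially step for step: apply Lemma~\ref{lemma:homotopy_loop_width_bound} to bound the width by $W$, then Lemma~\ref{lemma:short_hom} to bound intermediate lengths by $2D+3W+\epsilon$ while keeping width $W$, then Lemma~\ref{lemma:homotopy_loop_length_bound} to obtain based loops of length at most $2D+5W+\epsilon$.
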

\begin{proof}
    By Lemma \ref{lemma:homotopy_loop_width_bound}, there is a homotopy of width $W(c,n,v,D)$ contracting $\gamma$ to a point. 
    Thus by Lemma \ref{lemma:short_hom}, given any $\epsilon>0$ there is a homotopy contracting $\gamma$ to a point of width $W(c,n,v,D)$ via curves of length 
    \begin{align*}
        \ell(\gamma)+3W(c,n,v,D)+\epsilon\leq 2D+3W(c,n,v,D)+\epsilon.
    \end{align*}
    Finally, by applying Lemma \ref{lemma:homotopy_loop_length_bound} to this homotopy, we see that there exists a homotopy contracting $\gamma$ to $\gamma(0)$ over loops based at $\gamma(0)$ such that the length of loops in this homotopy is at most $2D+5W(c,n,v,D)+\epsilon$. 
\end{proof}    

\section{Constructing spheres that consist entirely of short loops}\label{5}

In this section, we will complete our proof of Theorem \ref{TheoremAA} by showing that we can homotope a map $h:S^m\to\Omega_{p,q}M$ to a map $g:S^m\to\Omega^L_{pq}M$ for $L=2m(5W(c,n, v, D) +3D)+D+ \delta$. We will divide $S^m$ into cubes and define this map inductively on the $k$-skeleta of these cubes. 
\par
In our proof, we will make use of the following lemma (c.f. Lemma 2.1 in \cite{nr7}).
\begin{Lem} 
    \label{Lemmapathhomotopy}
    Let $\gamma_1, \gamma_2:[0,1] \to M$ be two curves of lengths $l_1, l_2$ respectively, such that $\gamma_1(0)=\gamma_2(0)=p$ and $\gamma_1(1)=\gamma_2(1)=q$. 
    Let $\alpha=-\gamma_1 * \gamma_2$. 
    Suppose $\alpha$ can be contracted to a point 
    via loops 
    based at $p$ of length at most $l_3$. Then there exists a path homotopy between $\gamma_1$ and $\gamma_2$ over curves of length at most 
    $l_3+\min\{l_1,l_2\}$. 
\end{Lem}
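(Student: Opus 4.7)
The plan is to construct the path homotopy in two stages, joined at the auxiliary path $\alpha*\gamma_1$. Without loss of generality I assume $l_1\le l_2$, so that $\min\{l_1,l_2\}=l_1$; the other case is handled symmetrically. The starting observation is crucial: viewed as a loop at $p$ (i.e., the concatenation $\gamma_2*(-\gamma_1)$ obtained by traversing $\gamma_2$ and then $-\gamma_1$), $\alpha$ has length $l_1+l_2$, and since $\alpha$ itself appears as $\tau=0$ in the given null-homotopy, we automatically have $l_1+l_2\le l_3$.

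For the first stage, I would use the given null-homotopy $\alpha_\tau$ (with $\alpha_0=\alpha$, $\alpha_1\equiv p$, $\ell(\alpha_\tau)\le l_3$) to produce a path homotopy $F_2$ from $\alpha*\gamma_1$ to $\gamma_1$. At parameter $\tau$, define $F_2(\cdot,\tau)=\alpha_\tau*\gamma_1$, reparameterizing the concatenation so that the $\alpha_\tau$-piece shrinks to a point as $\tau\to 1$. This is well-defined because $\alpha_\tau(1)=p=\gamma_1(0)$; the length is at most $\ell(\alpha_\tau)+l_1\le l_3+l_1$; at $\tau=0$ we recover $\alpha*\gamma_1$, and at $\tau=1$ the first piece collapses so $F_2(\cdot,1)=\gamma_1$.

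For the second stage, I would construct a path homotopy $F_1$ from $\gamma_2$ to $\alpha*\gamma_1=\gamma_2*(-\gamma_1)*\gamma_1$ by the classical ``growing a backtrack'' move at $q$. Namely, for $\sigma\in[0,1]$, set $F_1(\cdot,\sigma)$ equal to $\gamma_2$ followed by $(-\gamma_1|_{[1-\sigma,1]})*\gamma_1|_{[1-\sigma,1]}$, which is a back-and-forth loop based at $q$ that shrinks to the constant path as $\sigma\to 0$. Each such path has length $l_2+2\sigma l_1\le l_2+2l_1$, with $F_1(\cdot,0)=\gamma_2$ and $F_1(\cdot,1)=\gamma_2*(-\gamma_1)*\gamma_1=\alpha*\gamma_1$.

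Concatenating $F_1$ with $F_2$ gives a path homotopy from $\gamma_2$ to $\gamma_1$ in which every path has length at most $\max\{l_2+2l_1,\,l_3+l_1\}$. The preliminary observation $l_1+l_2\le l_3$ forces $l_2+2l_1\le l_3+l_1$, yielding the desired uniform bound $l_3+\min\{l_1,l_2\}$. I don't anticipate a genuine obstacle here; the construction uses only standard concatenation and backtracking moves, and the only delicate point is choosing the reparameterizations so that $F_1$ and $F_2$ are jointly continuous at the seam, but since length is invariant under reparameterization the stated bounds are unaffected.
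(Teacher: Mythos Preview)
Your proof is correct and essentially identical to the paper's: both concatenate the shorter curve $\gamma_1$ with the contracting family $\alpha_\tau$ and then cancel the resulting $(-\gamma_1)*\gamma_1$ backtrack, invoking the same observation $l_1+l_2\le l_3$ (since $\alpha=\alpha_0$) to absorb the backtrack-stage bound $l_2+2l_1$ into $l_3+l_1$. The only cosmetic difference is the order in which you run the two stages.
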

\begin{figure}
    \includegraphics[width=0.95\textwidth]{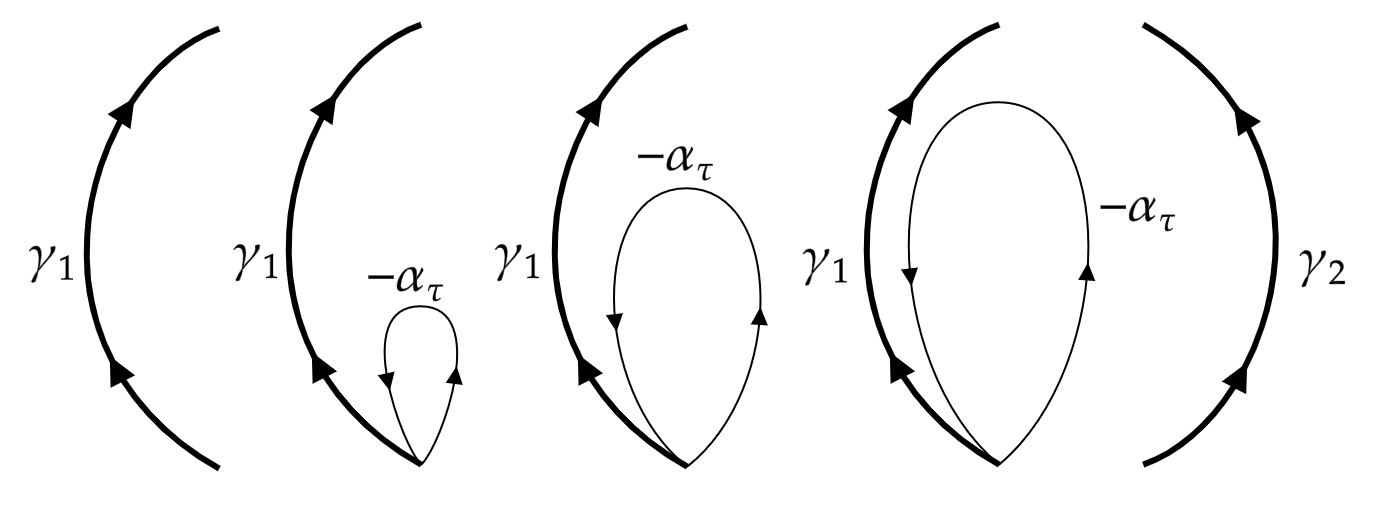}
    \caption{ The homotopy between $\gamma_1$ and $\gamma_2$ in the proof of Lemma \ref{Lemmapathhomotopy}} 
    \label{fig:path-homotopy}
\end{figure}
\begin{proof}
    Let $\alpha_{\tau}$ denote the homotopy that contracts $\alpha$ to a point. Without loss of generality, suppose 
    $l_1\leq l_2$. Otherwise, we can reverse the numbering of the $\gamma_i$ and use the homotopy $-\alpha_{\tau}$. The homotopy between $\gamma_1$ and $\gamma_2$ is first given by $-\gamma_1*\alpha_{1-\tau}$, which interpolates between $\gamma_1$ and $-\gamma_1*\gamma_1*-\gamma_2$. Then we apply the contraction of $\gamma_1*-\gamma_1$ along itself, leaving $\gamma_2$. Since $l_3\geq l_1+l_2$, the longest curve in this homotopy is of length $l_3+l_1=l_3+\min\{l_1,l_2\}$.
\end{proof}

The next lemma will allow us to prove the base case of Theorem \ref{TheoremAA}.
\begin{Lem}
    \label{simplecase} Let $M^n \in \mathscr{M}_{-1,v}^D(n)$ be simply-connected with diameter $d$.
    Let $\alpha:[0,1]\to M$ be a curve connecting points $p, q\in M$. Suppose that any closed curve of length at most $2d$ can be contracted to a point over curves of length at most $cd$. Moreover, suppose that the metric on $M$ is analytic. 
    Then given any $\epsilon>0$, there exists a continuous family of curves $H_{\tau}$, $\tau\in[0,1]$, of length at most $5W(c,n,v,D)+3D+\epsilon$ such that $H_{\tau}(0)=\alpha(0)$  and $H_{\tau}(1)\in \operatorname{Im}(\alpha)$. 
    \par 
    Moreover, consider the function $$\phi:[0,1]\to \operatorname{Im}\alpha$$ given by 
    $    \phi(\tau)=H_\tau(1)    $. This function
    is surjective and non-decreasing in the sense that if $\tau_1\leq \tau_2$ and $\phi(\tau_i)=\alpha(t_i)$, then $t_1\leq t_2$. 
\end{Lem}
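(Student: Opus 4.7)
The plan is to build the family $H_\tau$ out of minimizing geodesics from $p=\alpha(0)$ to points $\alpha(t)$, inserting short path homotopies at the finitely many parameters where the continuous choice of minimizer breaks down. For each $t\in[0,1]$ let $\sigma_t$ be a minimizing geodesic from $p$ to $\alpha(t)$; its length is at most the diameter $d\leq D$. The only obstruction to choosing $\sigma_t$ continuously in $t$ is the cut locus $\operatorname{Cut}(p)$ of $p$. This is where the analyticity hypothesis enters: for an analytic Riemannian metric the cut locus of $p$ is a subanalytic subset of $M$, and after first approximating $\alpha$ by a nearby piecewise analytic curve (absorbing the small endpoint error into the $\epsilon$-slack), the set $\alpha^{-1}(\operatorname{Cut}(p))$ can be taken to consist of finitely many points $0<t_1<\cdots<t_N<1$. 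On each open subinterval of $[0,1]\setminus\{t_1,\ldots,t_N\}$ the minimizer $\sigma_t$ may be chosen to depend continuously on $t$, with well-defined one-sided limits $\sigma_i^-=\lim_{t\uparrow t_i}\sigma_t$ and $\sigma_i^+=\lim_{t\downarrow t_i}\sigma_t$.

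At each jump $t_i$ the concatenation $-\sigma_i^-\ast\sigma_i^+$ is a closed curve based at $p$ of length at most $2d$, so by hypothesis it contracts over curves of length at most $cd$. Applying Lemma \ref{lemma:homotopy_final_bounds} promotes this to a based contraction through loops of length at most $2D+5W(c,n,v,D)+\epsilon$. Feeding that contraction into Lemma \ref{Lemmapathhomotopy} (with $\gamma_1=\sigma_i^-$, $\gamma_2=\sigma_i^+$ of lengths at most $D$ and the bound $l_3=2D+5W(c,n,v,D)+\epsilon$) yields a path homotopy $\Theta_i$ from $\sigma_i^-$ to $\sigma_i^+$, as paths from $p$ to $\alpha(t_i)$, through curves of length at most $5W(c,n,v,D)+3D+\epsilon$.

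I would then assemble $H_\tau$ by partitioning the $\tau$-interval $[0,1]$ into alternating closed intervals $J_0,K_1,J_1,K_2,\ldots,K_N,J_N$, letting $H_\tau$ sweep through $\sigma_t$ with $t$ running monotonically across $[t_{i-1},t_i]$ on $J_{i-1}$ (and analogously on $J_0$ and $J_N$), and letting $H_\tau$ traverse the path homotopy $\Theta_i$ on $K_i$. Continuity at the interfaces follows from the definition of $\sigma_i^\pm$; all curves begin at $p$ by construction; and the endpoints $H_\tau(1)$ are either $\alpha(t)$ for some $t$ (on the $J_i$) or $\alpha(t_i)$ (on the $K_i$), so they all lie in $\operatorname{Im}(\alpha)$. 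The natural lift $\tilde\phi\colon[0,1]\to[0,1]$ which equals the sweep parameter on each $J_i$ and the constant $t_i$ on $K_i$ is continuous, non-decreasing, and surjective, and $\phi=\alpha\circ\tilde\phi$ gives the required monotonicity and surjectivity of $\phi$.

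The main technical difficulty will be justifying the finite cut-locus structure, so that the jumps of $\sigma_t$ are discrete and the one-sided limits $\sigma_i^\pm$ exist. This is precisely where analyticity is indispensable, and one would invoke structural results on the analytic cut locus (e.g.\ of Buchner or Itoh) or general subanalytic triangulation theorems to reduce to the piecewise analytic setting. Once this finiteness is in hand, the rest of the argument is a straightforward concatenation whose length bound is dictated by the single worst curve appearing in some $\Theta_i$, namely $5W(c,n,v,D)+3D+\epsilon$.
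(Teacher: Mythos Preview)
Your proposal is correct and follows essentially the same route as the paper: build the family from minimizing geodesics $\sigma_t$ from $p$ to $\alpha(t)$, use analyticity to ensure only finitely many cut-locus discontinuities, and at each jump apply Lemma~\ref{lemma:homotopy_final_bounds} followed by Lemma~\ref{Lemmapathhomotopy} to fill in the digon by a path homotopy of length at most $5W(c,n,v,D)+3D+\epsilon$, then reparametrize. One small caveat: your suggested approximation of $\alpha$ by a piecewise analytic curve would move the endpoints $H_\tau(1)$ off $\operatorname{Im}(\alpha)$, and the $\epsilon$ in the statement is a length slack, not an endpoint slack; the paper sidesteps this by asserting the finiteness of $\alpha^{-1}(\operatorname{Cut}(p))$ directly from analyticity, so it is cleaner to argue that way (or, if you do approximate, append a short bridging segment back to $\alpha$ and absorb its length into $\epsilon$).
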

\begin{proof}
    First, let us connect $\alpha(0)$ with the points $\alpha(t)$ by minimizing geodesics $\eta_t$ of length at most $d$. Typically, this will not give us a continuous family of curves, as $\alpha$ may intersect the cut locus of $\alpha(0)$. However, if the metric on $M$ is analytic, then there are only finitely many such intersection points. Consider such an intersection point $\alpha(t_0)$. For a sufficiently small $\epsilon>0$, for $t\in[t_0-\epsilon,t_0)$, $\eta_t$ is continuous, well-defined, and $\eta_t$ approaches some minimizing geodesic $\gamma_1$ connecting $\alpha(0)$ and $\alpha(t_0)$ as $t\to t_0^-$. Similarly, for $t\in(t_0,t_0+\epsilon]$, $\eta_t$ approaches some minimizing geodesic $\gamma_2$ as $t\to t_0^+$. By Lemma \ref{lemma:homotopy_final_bounds}, the loop $\gamma_1*-\gamma_2$ is contractible to $\alpha(0)$ over loops based at $\alpha(0)$ of length at most $5W(c,n,v,D)+2D+\epsilon$. 
    Thus, by Lemma \ref{Lemmapathhomotopy}, $\gamma_1$ is path homotopic to $\gamma_2$ over paths of length at most $5W(c,n,v,D)+3D+\epsilon$. 
    This homotopy ``fills in" the digon between $\gamma_1$ and $\gamma_2$ (see Figure \ref{fig:filling}). Note that when we fill in the digon, the point $t_0$ in the domain is replaced by a closed interval, which, in turn, means that  we must reparametrize $H_{\tau}$.
    After filling in the digons at all such points of discontinuity, we obtain the desired family of short curves $H_{\tau}$ that continuously connect the point $\alpha(0)$ with some point $\alpha(t)$. 
    Note that $H_\tau(1)$ will not necessarily equal $\alpha(\tau)$ if we had to fill in a digon, since $H_\tau(1)$ will be constant for the interval of time that parameterizes the family of curves filling the digon. By construction, $\phi(\tau)=H_\tau(1)$ satisfies the required properties.
\end{proof}

\begin{figure}[!htbp]
    \centering
    \includegraphics[width=0.8\textwidth]{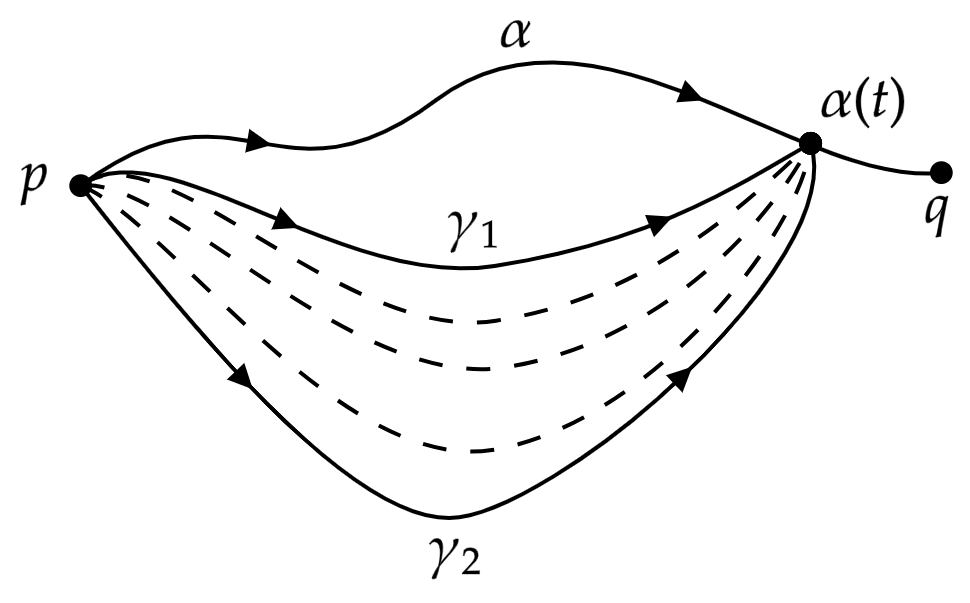}
    \caption{An illustration of the homotopy filling in the digon in the proof of Lemma \ref{simplecase}.}
    \label{fig:filling}
\end{figure}

We now construct the map that will allow us to prove the inductive step by extending a map of the $k$-skeleton to a map of the $(k+1)$-skeleton. For convenience, let $\Omega_{p,\cdot} M$ denote the space of curves in $M$ starting at the point $p$.
\begin{Lem} \label{Lemmashortpaths}
    Suppose that $f: [-\epsilon, \epsilon]^m \to \Omega_{p,q}M$ has the property that for all $x \in [-\epsilon, \epsilon]^m$ and all $t \in [0, 1]$, the path $s\mapsto f_{sx}(t)$ for $s\in[0, \frac{\Vert x \Vert}{\epsilon}]$ has length less than $\delta$, where $\Vert x \Vert$ is the sup norm.
    Consider an assignment 
    $$H_{\partial}: \partial ([-\epsilon, \epsilon]^m)\times [0, 1]\rightarrow \Omega_{p, \cdot}^L(M)$$
    such that each path $t\mapsto H_{\partial}(x, t)$ ends at the point 
    $f_{x}(t)$. Then 
    $H_{\partial}$ can be extended to an assignment 
    $$H:[-\epsilon, \epsilon]^m \times [0,1]\to \Omega_{p, \cdot}(M)$$
    whose image curves $H(x,t)$ start at $p$, end at $f_x(t)$, and have lengths at most $L + 10W(c,n,v,D)+6D+4\delta$.
\end{Lem}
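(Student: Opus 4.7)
The plan is to construct $H$ by radially extending $H_\partial$ inward in the cube and joining it to a canonical short curve at the origin, using Lemma~\ref{simplecase} to produce short curves and Lemma~\ref{Lemmapathhomotopy} to stitch them together. For $x \neq 0$ I write $x = s\,y$ with $y = \epsilon\, x/\|x\|_\infty$ on the boundary of the cube and $s = \|x\|_\infty/\epsilon \in (0,1]$, and denote by $\rho_{y,t}: [0,1] \to M$ the radial curve $s' \mapsto f_{s'y}(t)$, which has length less than $\delta$ by hypothesis. In the outermost radial layer $s \in [2/3, 1]$, I set
\[
H(x,t) \ :=\ H_\partial(y,t)\cdot \overline{\rho_{y,t}|_{[s,1]}},
\]
a curve from $p$ to $f_x(t)$ of length at most $L + \delta$ which agrees with $H_\partial$ at $s = 1$.

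Next I prepare two short canonical curves. At $x = 0$ I apply Lemma~\ref{simplecase} to $f_0$ (a curve from $p$ to $q$) to get a short curve $J(t)$ of length at most $5W+3D+\delta$ from $p$ to $f_0(t)$, and set $H(0,t) := J(t)$. For each $(y,t) \in \partial([-\epsilon,\epsilon]^m) \times [0,1]$, I apply Lemma~\ref{simplecase} to $H_\partial(y,t) \in \Omega_{p,\cdot}^L M$ to obtain a short curve $\tilde H(y,t)$ of length at most $5W+3D+\delta$ from $p$ to $f_y(t)$. In the middle layer $s \in [1/3, 2/3]$ I use Lemma~\ref{Lemmapathhomotopy} to build a path homotopy from $H_\partial(y,t)\cdot\overline{\rho_{y,t}|_{[2/3,1]}}$ to $\tilde H(y,t)\cdot\overline{\rho_{y,t}|_{[1/3,1]}}$, and in the innermost layer $s \in [0, 1/3]$ a second path homotopy (after endpoint adjustment along $\rho_{y,t}|_{[0,1/3]}$) from $\tilde H(y,t)\cdot\overline{\rho_{y,t}|_{[1/3,1]}}$ to $J(t)$. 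Each of these two path homotopies contributes at most $5W+3D+2\delta$ to the length bound, yielding the target $L + 10W + 6D + 4\delta$.

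The main technical obstacle will be verifying the length control in the two path homotopies, since the loops whose contractions are required by Lemma~\ref{Lemmapathhomotopy} may have length well in excess of $2D$, so Lemma~\ref{lemma:homotopy_final_bounds} does not apply directly. I plan to resolve this by decomposing each such loop into finitely many sub-loops of length at most $2D$ using minimizing geodesics from $p$ to suitably chosen intermediate points along $H_\partial(y,t)$, $\tilde H(y,t)$, $J(t)$, and $\rho_{y,t}$; each sub-loop is then contracted through loops of length at most $2D + 5W + \epsilon$ via Lemma~\ref{lemma:homotopy_final_bounds}, and the contractions are assembled (conjugated by short spokes) to give a null-homotopy of the original loop whose constituent loops stay within the required budget. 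Continuity in $y$ across the cut locus of $p$ is handled by a digon-filling argument exactly as in the proof of Lemma~\ref{simplecase}, using the analyticity hypothesis on $M$. Finally, continuity of the inner-layer formula at $x = 0$ is automatic, because the radial correction $\rho_{y,t}|_{[0,s]}$ degenerates to a constant curve as $s \to 0$, so $H(x,t) \to J(t)$ uniformly in the direction $y$.
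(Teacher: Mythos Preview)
Your proposal has a genuine gap in the continuity of the construction, and the paper's argument is structurally different and avoids the difficulty altogether.

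The central problem is your use of Lemma~\ref{simplecase} pointwise on the family $H_\partial(y,t)$ to produce $\tilde H(y,t)$. Lemma~\ref{simplecase} takes a \emph{single} curve $\alpha$ and outputs a one-parameter family built from minimizing geodesics to points of $\alpha$, with digon-fills at the (finitely many, by analyticity) cut-locus crossings \emph{along that one curve}. When you apply it separately for each $(y,t)\in\partial([-\epsilon,\epsilon]^m)\times[0,1]$, the resulting curves $\tilde H(y,t)$ need not depend continuously on $(y,t)$: the endpoint $f_y(t)$ now ranges over an $m$-dimensional set, and the cut-locus discontinuities of the minimizing-geodesic map cannot be repaired by a one-dimensional digon argument. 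Your remark that ``continuity in $y$ across the cut locus of $p$ is handled by a digon-filling argument exactly as in the proof of Lemma~\ref{simplecase}'' does not extend to this multi-parameter setting, and without it neither the middle layer nor the inner layer defines a continuous map on the cube.

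A second, related issue is the sub-loop decomposition you propose for contracting the long loop $\overline{H_\partial(y,t)}\cdot\tilde H(y,t)\cdot(\text{radial bits})$. Even granting continuity of $\tilde H$, you would need the intermediate minimizing geodesics, the number of sub-loops, and each application of Lemma~\ref{lemma:homotopy_final_bounds} to vary continuously in $(y,t)$; and after conjugating by spokes the length of the ambient loop during contraction picks up the full length of the not-yet-contracted portion, which is of order $L$. Your assertion that each path homotopy ``contributes at most $5W+3D+2\delta$'' is therefore not justified by the outlined mechanism.

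For comparison, the paper's proof invokes Lemma~\ref{simplecase} exactly once, on the single central curve $f_{\mathbf 0}$, producing a genuinely one-parameter family $\alpha_t$. It never contracts any loop and never invokes Lemma~\ref{Lemmapathhomotopy}. Instead it writes down explicit concatenations in three radial shells, the key device being a \emph{time rescaling}: in the middle shell one uses $H_\partial(x',t'(x)\,t)$ with $t'(x)=(\|x\|-\epsilon/4)/(\epsilon/2)$, so that as $\|x\|$ decreases from $3\epsilon/4$ to $\epsilon/4$ the boundary datum is read only up to time $t'(x)t\to 0$ and hence degenerates to a loop at $p$. The middle-shell curve is
\[
H_\partial(x',t'(x)t)\ast(-\widetilde\sigma_{x',t'(x)t})\ast(-\alpha_{t'(x)t})\ast\alpha_t\ast\widetilde\sigma_{x',t}\big|_{[0,\|x\|/\epsilon]},
\]
which visibly has length at most $L+2\delta+2(5W+3D+\delta)$ and is continuous because its ingredients $H_\partial$, $\widetilde\sigma$, and $\alpha$ are. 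This sidesteps both of the obstacles in your plan.
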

\begin{proof}
    We first describe some curves that we will use in our definition of $H$. Denote the point $(0,\ldots,0)\in [-\epsilon,\epsilon]^m$ by ${\bf 0}$. Consider the curve $f_{\bf0}:[0,1]\to M$. By Lemma \ref{simplecase}, there exists a continuous family of curves $\alpha_t$ such that $\alpha_t(0)=f_{\bf 0}(0)$, $\alpha_t(1)\in\operatorname{Im}f_{\bf 0}$, and the length of each $\alpha_t$ is bounded above by $5W(c,n,v,D)+3D+\delta$ (see Figure ~\ref{fig:steps-ab} (a)). Moreover, Lemma \ref{simplecase} proves that the function $\phi(\tau)=H_\tau(1)$ satisfies $|\phi^{-1}(f_{\bf 0}(t))|=1$ for all but finitely many $t$. For brevity, we will redefine $\alpha_t$ as the set of curves $\alpha_\tau$ for $\tau\in\phi^{-1}(f_{\bf 0}(t))$.
    \par
    Also, for any $x' \in \partial([-\epsilon, \epsilon]^m)$, let $\sigma_{x'}$ be the straight-line path from ${\bf0}$ to $x'$. For each fixed $t \in [0, 1]$, let $\widetilde{\sigma}_{x',t}$ denote the image of $\sigma_{x'}$ under the map $x\mapsto f_x(t)$ (see Figure ~\ref{fig:steps-ab} (b)). Recall that $\ell(\widetilde{\sigma}_{x',t})\leq \delta$ for each $t$ by assumption. Note that the paths $\widetilde{\sigma}_{x',t}$ vary continuously with $x'$ and $t$. 
    \begin{figure}[!htbp]
        \centering
        \includegraphics[width=0.8\textwidth]{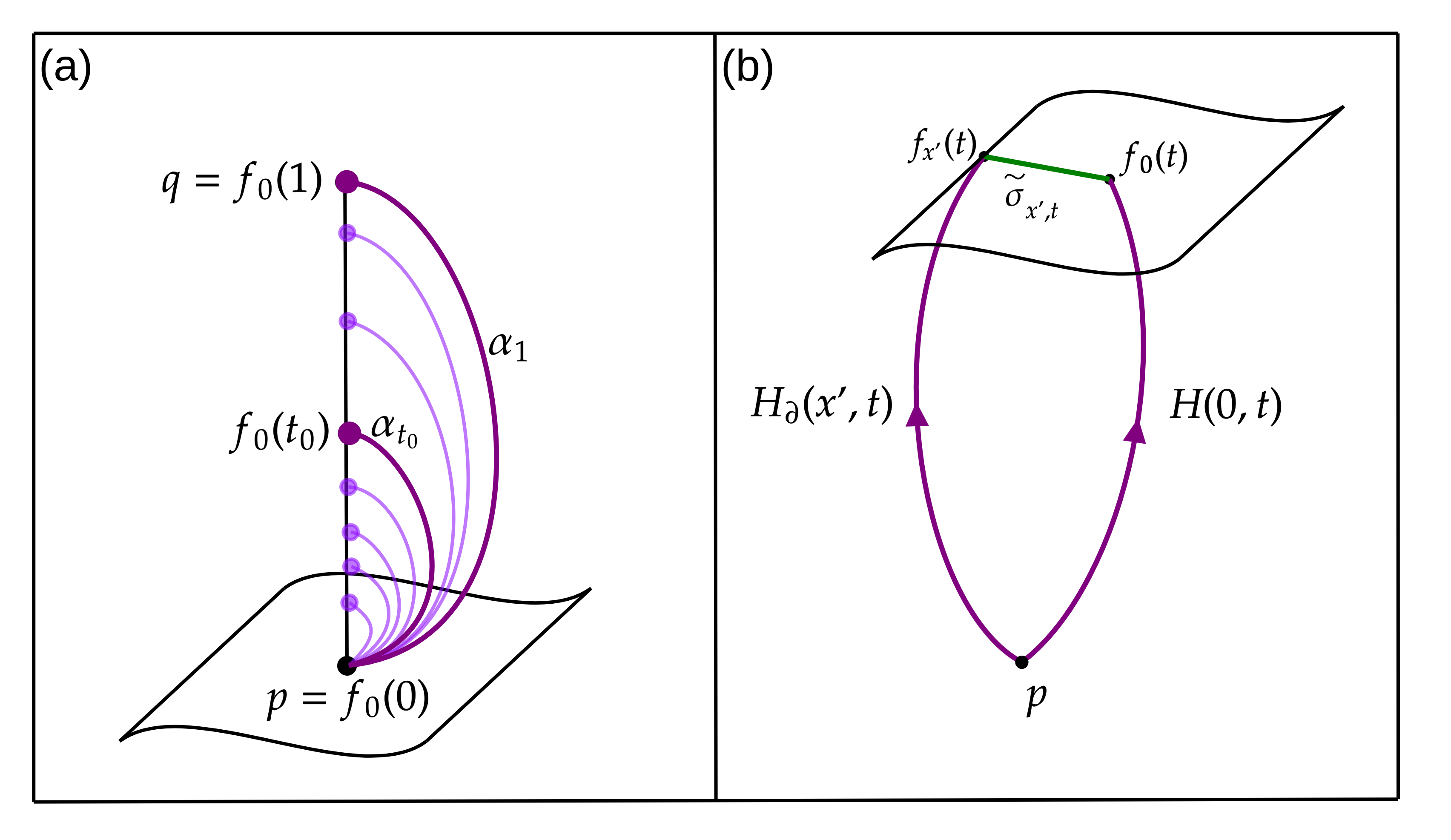}
        \caption{An illustration of (a) the $\alpha_t$ and (b) the $\widetilde{\sigma}_{x', t}$ in the proof of Lemma \ref{Lemmashortpaths}. Note that $\sigma_{x,{\bf0}}$ equals the point curve $p$ for any $x$.}
        \label{fig:steps-ab}
    \end{figure}
    We now describe the construction of $H$ on
    $[-\epsilon, \epsilon]^m \times [0,1]$. Define
    \begin{align*}
        t'(x) = \frac{\Vert x \Vert - \epsilon/4}{\epsilon/2}.
    \end{align*} 
    We then define $H(x,t)$ as
    \begin{align*}
        H(x, t)=
        \begin{cases}        
        H_1(x,t)
        &
        0\leq \Vert x \Vert < \frac{\epsilon}{4}
        \\    
        \\
        H_2(x,t)
        &
        \frac{\epsilon}{4} \leq \Vert x\Vert \leq \frac{3\epsilon}{4}
        \\  
        \\
        H_3(x,t)
        &
        \frac{3\epsilon}{4} <\Vert x \Vert \leq \epsilon
        \end{cases}
    \end{align*}
    where 
    \begin{align*}
        H_1(x,t)&=
        \alpha_{t}
        *\widetilde{\sigma}_{x', t}(s)|_{s\in\left[0,\frac{\Vert x \Vert}{\epsilon}\right]},
        \\
        H_2(x,t)&=
        H_{\partial}(x', t'(x)t)
        *-\widetilde{\sigma}_{x', t'(x)t}
        *-\alpha_{t'(x)t}
        *\alpha_{t}
        *\widetilde{\sigma}_{x', t}(s)|_{s\in\left[0,\frac{\Vert x \Vert}{\epsilon}\right]}, {\mathrm{and}}
        \\
        H_3(x,t)&=
        H_{\partial}(x', t)
        *-\widetilde{\sigma}_{x', t}
        *-\alpha_{t}
        *\alpha_{t}
        *\widetilde{\sigma}_{x', t}(s)|_{s\in\left[0,\frac{\Vert x \Vert}{\epsilon}\right]}.
    \end{align*}
    The three cases above are shown in Parts (a), (b) and (c) of Figure \ref{fig:steps-cdef} respectively.
    Next, in order to make $H$ equal to $H_\partial$ on $\partial ([-\epsilon,\epsilon]^m)\times [0,1]$, we do the following. Let 
    $$\beta_{x',t}=\alpha_t*
    \left( \widetilde{\sigma}_{x', t}(s)\mid_{s = [0,\frac{\Vert x \Vert}{\epsilon}]}\right).$$
    We then extend $H(x,t)$ to $\epsilon <\Vert x \Vert \leq \epsilon+1$ by
    \begin{multline*}
        H(x,t)=H_{\partial}(x', t)
        *-\widetilde{\sigma}_{x', t}(s)\mid_{s\in\left[\frac{\Vert x \Vert}{\epsilon},1\right]}\\
        *-\beta_{x', t}(s)|_{s\in\left[0,1+\epsilon-\Vert x \Vert\right]}
        *\beta_{x', t}(s)|_{s\in\left[0,1+\epsilon-\Vert x \Vert\right]}.
    \end{multline*}
    Note that at $t=1$, we have $H(x,t)=H_\partial(x',t)$, as $\widetilde{\sigma}_{x', 1}$ is a point curve $q$.
    \begin{figure}[!htbp]
        \centering        \includegraphics[width=1\textwidth]{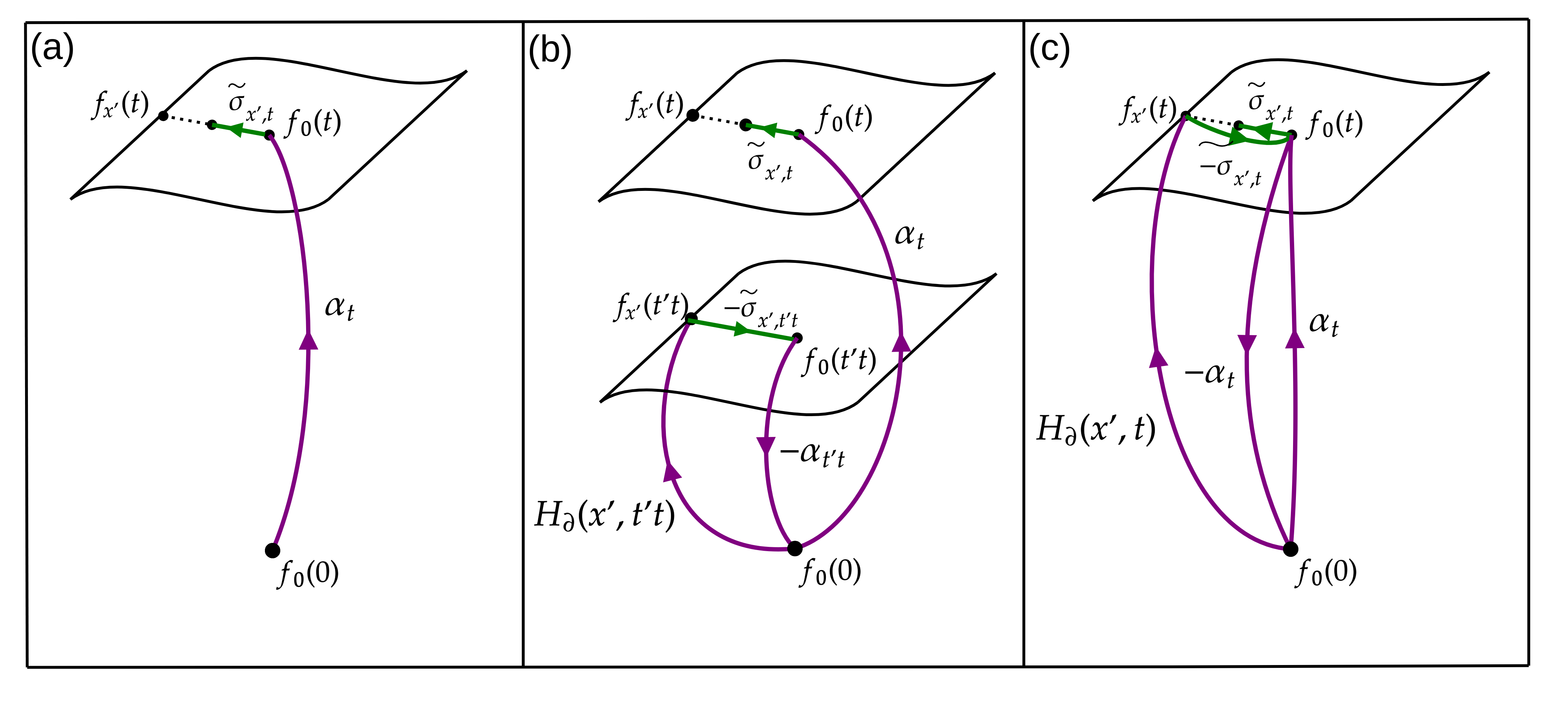}
        \caption{An illustration of the extension of $H_\partial$ to $[-\epsilon, \epsilon]^m\times [0, 1]$ in the proof of Lemma \ref{Lemmashortpaths}.}
        \label{fig:steps-cdef}
    \end{figure}
    Note that when $0\leq \Vert x \Vert < \frac{\epsilon}{4}$, $H(x,t)$ has length at most $2\delta+\ell(\alpha_t)$.
    When $\frac{\epsilon}{4}\leq \Vert x \Vert \leq \epsilon+1$, $H(x,t)$ has length at most $2\delta+L+2\ell(\alpha_t)$.
    Therefore $\ell(H(x,t))\leq 2\delta +L +2(5W(c,n,v,D)+3D+\delta)$, as claimed. 
    Finally, we reparameterize $H$ to have the required domain $[-\epsilon,\epsilon]^m\times [0,1]$. Note that $H(x,t)$ is multivalued when $|\phi^{-1}(f_{\bf{0}}(t))|>1$ or $|\phi^{-1}(f_{\bf{0}}(t'(x)t))|>1$.
    \par
\end{proof}

\section{The Proof of the Main Theorem}\label{sec:main_proof}

We are now in a position to finish the proof of the main  theorem, which we restate for the convenience of the reader.
\begin{Thm}
    Let $M\in \mathscr{M}_{-1,v}^D(n)$ be simply connected and analytic with $\diam(M)=d$. Let $c>0$ such that any closed curve of length at most $2d$ on $M$ can be homotoped to a point over curves of length at most $cd$. Then given any $\delta>0$ and any continuous map $f: S^l \to \Omega_{p,q} M$, there exists  a rational function $G(n, v, D)$ and a map $g:S^l \to \Omega_{p,q}^{L}M$ homotopic to $f$, where 
      $$L\leq 2l(5W(c,n, v, D) +3D)+D+ \delta.$$
\end{Thm}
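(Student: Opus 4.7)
I plan to construct $g$ together with an explicit homotopy from $f$ to $g$ by an induction on the skeleta of a sufficiently fine cubical subdivision of $S^l$. First, using compactness of $S^l$ and uniform continuity of $f$, I choose a cubulation so small that, in each top-dimensional cube identified with $[-\epsilon,\epsilon]^l$ via a chart centered at one of its vertices, the hypothesis of Lemma \ref{Lemmashortpaths} holds with the tolerance $\delta$ there replaced by $\delta/(4l+1)$: every auxiliary path $s\mapsto f_{sx}(t)$ for $s\in[0,\|x\|/\epsilon]$ has length at most $\delta/(4l+1)$, uniformly in $x$ and $t$. The existence of such an $\epsilon$ follows from uniform continuity of $f$.

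Next, I inductively construct a continuous ``sweeping'' map
$$
H:S^l\times[0,1]\longrightarrow\Omega_{p,\cdot}M,\qquad H(x,t)(0)=p,\quad H(x,t)(1)=f_x(t),
$$
following the scheme of Lemma \ref{Lemmashortpaths}. For the base case, on each vertex $v$ of the cubulation I apply Lemma \ref{simplecase} to the curve $f_v$, producing $H(v,\cdot)$ as a continuous family of curves of length at most $5W(c,n,v,D)+3D+\delta/(4l+1)$; choosing the initial member to be the trivial geodesic at $p$ and reparameterizing along the surjection $\phi$ of Lemma \ref{simplecase} ensures $H(v,0)$ is the constant curve at $p$ and $H(v,1)$ is a short path from $p$ to $q$. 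Given $H$ already defined on the $k$-skeleton, I extend across each $(k+1)$-cube by invoking Lemma \ref{Lemmashortpaths} with $H_\partial$ equal to the restriction of $H$ to the cube's boundary: this boundary data is already specified by the inductive hypothesis and is unambiguous on cube intersections, and each extension increases the length bound by at most $10W(c,n,v,D)+6D+4\delta/(4l+1)$. After $l$ extensions, every curve $H(x,t)$ has length at most
\begin{equation*}
(10l+5)W(c,n,v,D)+(6l+3)D+\delta,
\end{equation*}
which agrees, up to harmless additive constants, with the bound $2l(5W(c,n,v,D)+3D)+D+\delta$ claimed in the theorem.

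Finally, I set $g(x):=H(x,1)$, obtaining a map $S^l\to\Omega^L_{p,q}M$ with the desired length bound. The homotopy $F:S^l\times[0,1]\to\Omega_{p,q}M$ from $f$ to $g$ is given (after reparameterization to the unit interval) by
$$
F(x,s)\;=\;H(x,s)\,*\,f_x|_{[s,1]},
$$
so that $F(x,1)=H(x,1)=g(x)$ is immediate and $F(x,0)=H(x,0)*f_x=f_x$ because the construction forces $H(x,0)$ to be the constant curve at $p$; continuity of $F$ follows from continuity of $H$ and of $f$. The main obstacle I anticipate is organizational: ensuring that the convention $H(x,0)\equiv p$ is genuinely preserved under every application of Lemma \ref{Lemmashortpaths}, and that the families of curves produced by Lemma \ref{simplecase} --- which a priori are only defined up to the multi-valued inverse of $\phi$ --- can be reparameterized compatibly along the edges of the cubulation so that the inductive extensions glue on cube boundaries. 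This bookkeeping is handled by doing the induction skeleton-by-skeleton with the data on the lower skeleton fully specified before extending to the next level.
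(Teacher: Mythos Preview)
Your proposal follows the paper's approach closely: both arguments subdivide $S^l$ into small cubes, build the auxiliary map $H:S^l\times[0,1]\to\Omega_{p,\cdot}M$ by induction on skeleta using Lemma~\ref{simplecase} at the vertices and Lemma~\ref{Lemmashortpaths} for the extension steps, set $g(x)=H(x,1)$, and realize the homotopy as $F(x,s)=H(x,s)*f_x|_{[s,1]}$. One small difference is that the paper records $g(v)$ at a vertex as a minimizing geodesic $\alpha_v$ (which is why the stray ``$+D$'' appears in the stated bound), whereas you take $g(v)=H(v,1)$ directly from Lemma~\ref{simplecase}; your version is arguably cleaner, since Lemma~\ref{Lemmashortpaths} requires $H_\partial(v,t)$ to end at $f_v(t)$, and that is exactly what applying Lemma~\ref{simplecase} to $f_v$ produces. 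The bookkeeping you flag about $H(x,0)\equiv p$ and the multi-valued reparameterizations is genuine and the paper handles it in the same ad hoc way you describe.

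The only point to tighten is your remark that the bound $(10l+5)W+(6l+3)D+\delta$ ``agrees, up to harmless additive constants,'' with $2l(5W+3D)+D+\delta$. The discrepancy is $5W+2D$, and $W$ is not a harmless constant---it is the dominant term. The paper's own arithmetic in this final step is equally loose, and in the end only the coarser estimate $L\le e^{c e^{G(n,v,D)}}+\delta$ (from the bound on $W$ in Lemma~\ref{lemma:homotopy_loop_width_bound}) is used downstream, so the precise linear coefficient in $l$ does not matter. But you should either state the bound you actually obtain or note explicitly that it differs from the displayed one by a term independent of $l$.
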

\begin{figure}[!htbp]
    \centering
      \includegraphics[width=1\textwidth]{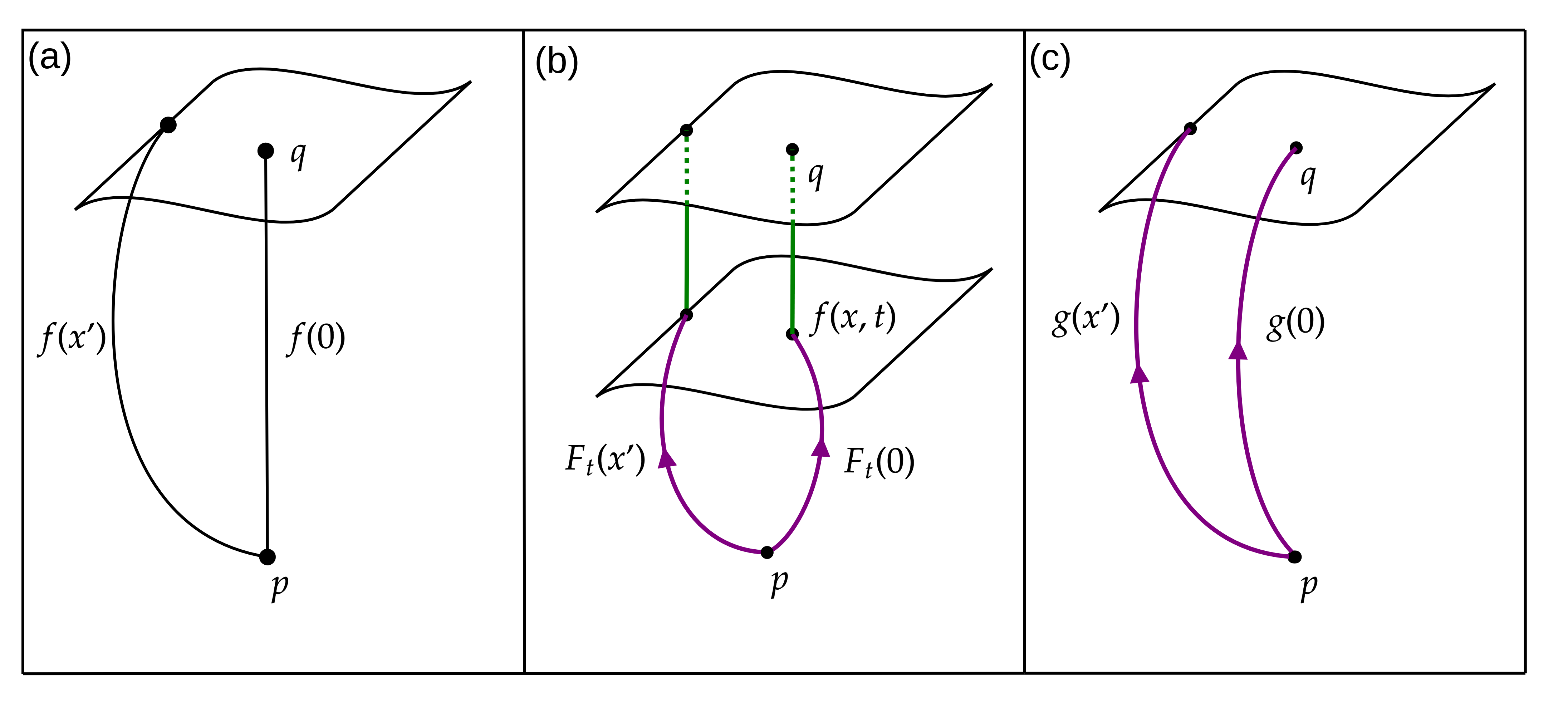}
       \caption{Constructing a homotopy between $h$ and $g$.}
     \label{figure6}
\end{figure}
 
\begin{proof}
Let $f:S^l \longrightarrow \Omega_{p,q}M$ be any continuous map of $S^l$ into the space of piecewise differentiable paths on $M$ between points $p, q \in M$. We must construct a map $g: S^l \to \Omega_{p,q}^LM$, where $L=2l(5W(c,n, v, D) +3D)+D+ \delta$ for any $\delta>0$, as well as a homotopy $F_t$ between $f$ and $g$. 
\par
Fix $\delta >0$. Partition $S^l$ into small $\epsilon$-cubes, $R_i$, such that for any $x, y \in R_i$, we have $\dist(h(x),h(y)) < \delta$. We will construct $g$ and the homotopy $F_t$ from $f$ to $g$ by induction on the skeleta of $S^l$. We start with the 0-skeleton. Given a vertex $v$, define $g(v)$ to be a minimizing geodesic $\alpha_v$ connecting $p$ and $q$. Note that this curve has length at most $D$. Apply Lemma \ref{simplecase} to define a family of curves $H_t$ and a map $\phi$ such that $\phi(t)=H_t(1)\in\operatorname{Im}\alpha_v$, $H_\tau(0)=p$ and $H_t$ has length at most $5W(c,n,v,D)+3D+\epsilon$. Define the multivalued assignment $H_0(v,t):[0,1]\to \Omega_{p,q}M$ by mapping $H_0(v,t)$ to the set of all curves $H_\tau$ with $H_\tau(1)=\alpha(t)$. Moreover, define $F_t(v)=H_t*f(x)|_{[\phi(t),1]}$.
\par
In general, assuming we have extended $g$ and $F_t$ to the $k$-skeleton, we can define a map on the $(k+1)$-skeleton as follows. Note that $f|_{R_i}: [-\epsilon, \epsilon]^m \to \Omega_{p,q}M$ satisfies the condition that $s\mapsto f_{sx}(t)$ has length less than $\delta$. 
Moreover, $H_{k}:\partial([-\epsilon,\epsilon]^{k+1})\times[0,1] \to \Omega^L{p,\cdot}(M)$ satisfies the condition that $F_1(x)=f(x,1)=q$. 
Therefore we can apply Lemma ~\ref{Lemmashortpaths} to define an assignment $H_{k+1}(x,t):[-\epsilon,\epsilon]^{k+1}\times[0,1]\to\Omega_{p,\cdot}M$ that agrees with $F_t$ on $\partial ([-\epsilon,\epsilon]^{k+1})\times[0,1]$. 
We then define $g(x)$ on the $(k+1)$-skeleton as $H_{k+1}(x,1)$, where if $H_{k+1}(x,1)$ is not single-valued, we define $g(x)$ to be the ``final" curve in the family $H_{k+1}(x,1)$. 
Thus $g$ remains a function. We can define $F_t(x):S^m\times[0,1]\to \Omega_{p,q}M$ on the $(k+1)$-skeleton by $F_t(x)=H_{k+1}(x,t)*f(x)|_{[t,1]}$. 
Note that this concatenation makes sense because $H_{k+1}(x,t)$ is a curve joining $p$ and $f(x,t)$ by definition. 
Although $F_t$ may be multivalued, we can convert it to a homotopy through continuous functions. To do so, we recall that at any $x$, the associated curves in the image of $H_{k+1}(x,t)$ can be parameterized as a continuous family by construction. Therefore we can redefine $F_t(x)$ at as this family.
\par 
Each time we apply this lemma, we increase the maximum length of a curve in the image of $g$ by $2(5W(c,n, v, D) +3D +2\delta) +3\delta$. After redefining $\delta$, we obtain a final maximum length of at most $2l(5W(c,n, v, D) +3D)+D+ \delta$. This concludes the proof of Theorem \ref{TheoremAA}.
\end{proof}

\noindent {\bf Acknowledgments.} The authors would like to thank Hannah Alpert and Megan Kerr for many helpful conversations. They are also grateful to the Banff International Research Station Casa Matem\'atica Oaxaca for its support
during the Women in Geometry 2 Workshop (19w5115), where the work on this paper was begun. This article is based upon work supported by the National Science Foundation
under Grant No. DMS-1928930 and the National Security Agency under Grant No. H98230-22-1-0018 while the authors participated in a program hosted by the Mathematical Sciences Research Institute in Berkeley, California, during the summer of 2022. The authors also gratefully acknowledge support from the Princeton IAS Summer Collaborators program in 2024, where the bulk of the work on this project was completed. This material is based upon work supported by the National Science
Foundation under Grant No. DMS-1928930, while Beach, Rotman, and Searle were in
residence at the Simons Laufer Mathematical Sciences Institute (formerly MSRI) in Berkeley, California, during the Fall 2024 semester. Beach was supported in part by an NSERC Canada Graduate Scholarships Doctoral grant.  Contreras Peruyero was partially supported by the UNAM Postdoctoral Program (POSDOC) and CONACyT Research Grant Ciencia de Frontera 2019 CF 217392.  Rotman was partially supported by an NSERC Discovery Grant,  Searle was partially supported by NSF Grants DMS-1906404 and DMS-2204324.

\bibliographystyle{alpha}
\bibliography{references}
\end{document}